\documentclass[leqno,11pt,oneside]{amsart}
\usepackage{amsmath,amssymb,amsthm,mathrsfs}
\usepackage{epsfig,color}
\usepackage[latin1]{inputenc}
\usepackage[english]{babel}
\usepackage{mathtools}
\usepackage{braket}
\usepackage[toc]{appendix}
\usepackage{enumitem}
\usepackage{ulem}
\usepackage{graphicx}
\usepackage{xfrac, nicefrac}
\usepackage{csquotes}
\usepackage{comment}
\allowdisplaybreaks

\numberwithin{equation}{section}

\usepackage{esint}

\usepackage{hyperref}

\DeclarePairedDelimiter{\norm}{\lVert}{\rVert}

\newtheorem*{theorem*}{Theorem}
\newtheorem{theorem}{Theorem}[section]

\newtheorem*{proposition*}{Proposition}

\newtheorem{corollary}[theorem]{Corollary}
\newtheorem{remark}[theorem]{Remark}
\newtheorem*{remark*}{Remark}

\usepackage{graphicx}
\usepackage{tikz}
\usepackage{pgfplots}
\usepackage{geometry}

\newcommand{\field}[1]{\mathbb{#1}}

\providecommand{\R}{\field{R}}

\renewcommand{\emptyset}{\varnothing}

\begin{document}
	\author{Francesco Pagliarin and Nikita Simonov}
    
    \address{Francesco Pagliarin: Institut Camille Jordan, Universit\'e Claude Bernard Lyon 1, 43 boulevard du 11 Novembre 1918
69622 Villeurbanne cedex, France}
\email{pagliarin@math.univ-lyon1.fr}

\address{Nikita Simonov: Sorbonne Universit\'e, Universit\'e Paris Cit\'e, CNRS, Laboratoire Jacques-Louis Lions, LJLL, F-75005 Paris, France}
\email{nikita.simonov@sorbonne-universite.fr}
	\title{From weighted Carlson-Levin inequalities to weighted Gagliardo-Nirenberg inequalities}
    \subjclass[2020]{26D15, 35A23, 49J40}
	\keywords{Carlson-Levin inequality, Gagliardo-Nirenberg inequality, Log-Sobolev inequality, weights, rigidity}
\begin{abstract} 
We study weighted Carlson-Levin inequalities on cones, computing the optimal constant and classifying optimizers via an Entropy method. The limiting case of a logarithmic Carlson-Levin inequality is also treated. As a Corollary, we obtain a rigidity result for a weighted Gagliardo-Nirenberg inequality. We conclude with some examples involving monomial weights.
\end{abstract}

\maketitle
%\tableofcontents

\section{Introduction}
Sobolev and Gagliardo-Nirenberg inequalities, either in the classical form (i.e., posed on $\mathbb{R}^d$ with integrals taken with respect to the Lebesgues measure) or in their weighted counterpart (i.e., posed on a set $E\subset\mathbb{R}^d$ with integral taken with respect to the measure $\omega\,dz$ where $\omega$ is a non-negative weight), play a prominent role in the investigation of existence, regularity, and qualitative behaviour of many PDEs, for instance~\cite{EvansPDEBook,GisutiBook,Maggi_2023,Bonforte_2025,Bakry_2014} and references therein. For such inequalities the optimal constant and optimizers are fully understood only in some special instances,~\cite{Dolbeault_2003,Cordero_Erausquin_2004}. In the weighted case, essentially all the known results hold under geometric assumption on the weights (for instance, concavity, convexity, or homogeneity), see for instance~\cite{Cabr__2016,Cabr__2013}. In this direction,  Z. Balogh, S. Don and A. Krist\'aly considered, in a recent paper \cite{Balogh_2025}, a family of weighted Gagliardo-Nirenberg and Sobolev inequalities posed on an open convex cone $E\subseteq \mathbb{R}^d$ for functions $u: \,E\rightarrow \mathbb{R}$,
\begin{equation}\label{eqGN}\tag{$GNS_w$}
\mathcal{C}\,\left(\int_E |u|^{p\sigma}\omega_1dz\right)^\frac{1}{q}\leq \left(\int_E |\nabla u|^p\omega_2 dz\right)^\frac{1-\theta}{p}\left(\int_E |u|^{p\sigma\gamma}\omega_3 dz\right)^\frac{1}{s},
\end{equation}
where $\mathcal{C}>0$. The weights $\omega_i$, $i\in\{1,2,3\}$ are non-negative, homogeneous, and satisfy a compatibility condition (see \cite[Equation 1.6]{Balogh_2025} and \eqref{eqmain}). The parameters are as follows: $0<\gamma<1$, $\sigma=(p(\gamma-1)+1)^{-1}$, and the exponent $\theta$ is deduced by scaling. In~\cite{Balogh_2025}, inequality~\eqref{eqGN} is investigated via optimal transport theory by adapting an argument largely inspired by the celebrated work of D. Corder-Erasquin, B. Nazaret and C. Villani~\cite{Cordero_Erausquin_2004}. Let us define
\begin{equation*}
\mathcal{C}_{WGNS}:=\inf_{f\in\mathrm{W}^{1,p}_w(E)} \frac{\left(\int_E |\nabla u|^p\omega_2 dz\right)^\frac{1-\theta}{p}\left(\int_E |u|^{p\sigma\gamma}\omega_3 dz\right)^\frac{1}{s}}{\left(\int_E |u|^{p\sigma}\omega_1dz\right)^\frac{1}{q}}\,,
\end{equation*}
where $\mathrm{W}_w^{1,p}(E)$ is a weighted version of the standard space $\mathrm{W}^{1,p}(\mathbb{R}^d)$, see~\cite[(1.7)]{Balogh_2025} for an explicit definition. In~\cite[Theorem 1.1]{Balogh_2025} the authors prove constructively the validity of \eqref{eqGN} for an explicit constant $\mathcal{C}_{BDK}>0$ (depending on the weights and on the compatibility condition~\eqref{eqmain}) such that
\begin{equation*}
0 < \mathcal{C}_{BDK}\leq \mathcal{C}_{WGNS}\,.
\end{equation*}
In particular, when the three weights are equal (up to a multiplicative constant), the obtained constant $\mathcal{C}_{BDK}$ is sharp, that is $\mathcal{C}_{WGNS}=\mathcal{C}_{BDK}$, see~\cite[page 4]{Balogh_2025}. This was also previously observed by N. Lam in~\cite{Lam_2017} who considered a similar case with different weights.  On the other hand, Z. Balogh, S. Don, and A. Krist\'aly also provide a (conditional) rigidity result; see~\cite[Theorem 1.2]{Balogh_2025}, which roughly says that if $\mathcal{C}_{BDK}$ is sharp then the three weights must be equal up to a multiplicative constant. Let us be more specific: this rigidity result holds under conditions $(\textrm{H}_1)$ and $(\textrm{H}_2)$ which are stated below. The first one being: 
\begin{equation}\label{H_1}\tag{$\textrm{H}_1$}
\begin{array}{cc}
    \textit{There exists a non-zero function $F$ such that} \\ [2mm]
    \mathcal{C}_{BDK}\,\left(\int_E |F|^{p\sigma}\omega_1dz\right)^\frac{1}{q}=\left(\int_E |\nabla F|^p\omega_2 dz\right)^\frac{1-\theta}{p}\left(\int_E |F|^{p\sigma\gamma}\omega_3 dz\right)^\frac{1}{s}\,.
\end{array}
\end{equation}
In order to state $(\textrm{H}_2)$ we need the following inequality, which shall play a major role in our paper: 
\begin{equation}\label{WCL}\tag{WCL}
\mathcal{C}_{WCL}\left(\int_E \rho^\gamma \omega_1^\frac{1}{q}\omega_2^\frac{1}{p}dz\right)\leq \left(\int_E \rho \omega_1dz\right)^\frac{\gamma\beta}{\alpha+\beta}\left(\int_E \rho |z|^{q}\omega_1dz\right)^\frac{\gamma\alpha}{\alpha+\beta}\,,
\end{equation}
where $0<\gamma<1$, $q=p/(p-1)$, $0<\alpha, \beta$ are fixed by scaling, and $\mathcal{C}_{WCL}$ is the optimal constant. The second assumption for the result in~\cite[Theorem 1.2]{Balogh_2025} is:
\begin{equation}\label{H_2}\tag{$\textrm{H}_2$}
\begin{array}{cc}
\textit{There exists a non-zero optimal function for inequality~\eqref{WCL}.}
\end{array}
\end{equation}
The existence of an optimizer for inequality~\eqref{WCL} plays an important part in the proof of the rigidity result in~\cite[Theorem 1.2]{Balogh_2025}; however, the authors clarify, see~\cite[Comments after Theorem 1.2]{Balogh_2025}, that it is unclear whether this holds true for any triple of weights in their setting. The scope of the present paper is exactly to give a precise answer to this question under minimal integrability assumptions on the weights. \\
\newline

Inequality~\eqref{WCL} is known in the literature under the name of Carlson-Levin's inequality; see, for instance~\cite{Carrillo_2019}. In the simplest ($q=2$) and unweighted case, this inequality can be simply stated (when posed on $\mathbb{R}^d$) as follows:
\begin{equation}\label{simple-CL}
\mathcal{C}_{CL} \left(\int_{\mathbb{R}^d} \rho^\gamma dz \right)^\frac{1}{\gamma} \leq \left(\int_{\mathbb{R}^d} \rho\,|z|^2dz\right)^\frac{d(1-\gamma)}{2\gamma}\left(\int_{\mathbb{R}^d} \rho \,dz\right)^\frac{2\gamma-d(1-\gamma)}{2\gamma}\,,
\end{equation}
where $1>\gamma>\frac{d}{d+2}$. It has already been observed in~\cite[Remark (3), pag 318]{Cordero_Erausquin_2004} that it plays the role of a dual inequality for the Sobolev and Gagliardo-Nirenberg inequalities. In the unweighted case, it is known that the optimal functions (up to a multiplication by a constant and re-scaling) are Barenblatt profiles (also called Aubin-Talenti profiles), given by
\begin{equation}\label{optimizer-simple-case}
W(z)=\left(1+|z|^2\right)^\frac{1}{1-\gamma}\,.
\end{equation}
A proof of this fact can be found in~\cite[Lemma 5]{Carrillo_2019}, see also section~\ref{sec<1}.\\

Let us return to inequality~\eqref{WCL}: here we consider $E\subseteq \mathbb{R}^d$ an open convex cone and two homogeneous weights $\omega_1,\omega_2\in C(E)$ such that, for any $t>0$ we have
\begin{equation}\label{homogenuity}
\omega_i(tz)=t^{h_i}\omega_i(z)\quad\mbox{where}\quad h_1,h_2>-d\,.
\end{equation}
It is natural to consider weights that are integrable on the set $\mathbb{S}^{d-1}_E:=\mathbb{S}^{d-1}\cap E$ with respect to the Riemannian measure $dV_{\mathbb{S}^{d-1}}$ of the sphere $\mathbb{S}^{d-1}\subset\mathbb{R}^d$, eventually restricted to $\mathbb{S}^{d-1}_E$. 

Our first result concerns inequality~\eqref{WCL}, in the case $0<\gamma<1$, and, in its simplest form, can be stated as follows:
\begin{theorem}\label{first-result:simple}
Let $p\in\left(1,\infty\right)$, $q=p/(p-1)$. Assume furthermore that
\begin{equation}\label{thm-1.integrability-condition} 
\omega_1^{1+\frac{1}{p(\gamma-1)}}\omega_2^\frac{1}{p(1-\gamma)}\in \mathrm{L}^1(\mathbb{S}^{d-1}_E, dV_{\mathbb{S}^{d-1}})\,,
\end{equation}
and that
\begin{equation}\label{thm-1.necessary}
	\frac{d+\frac{h_1}{q}+\frac{h_2}{p}}{d+h_1+q}<\gamma<\frac{d+\frac{h_1}{q}+\frac{h_2}{p}}{h_1+d}\,.
\end{equation}
Then inequality~\eqref{WCL} holds and the optimal function is given by
\begin{equation}\label{thm-1.optimal}
W(z):=\omega_1^\frac{1}{p(\gamma-1)}(z)\,\omega_2^\frac{1}{p(1-\gamma)}(z)\,\left(C+|z|^{q}\right)^\frac{1}{\gamma-1}\quad\text{for some}\,\,C>0\,.
\end{equation}
\end{theorem}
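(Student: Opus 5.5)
Write $\mu:=\omega_1\,dz$ and $\Omega:=\omega_1^{-1/p}\omega_2^{1/p}$. Then \eqref{WCL} reads
\[
\mathcal{C}_{WCL}\int_E \rho^\gamma\,\Omega\,d\mu\le\Big(\int_E\rho\,d\mu\Big)^{\frac{\gamma\beta}{\alpha+\beta}}\Big(\int_E\rho|z|^q\,d\mu\Big)^{\frac{\gamma\alpha}{\alpha+\beta}} .
\]
The plan is a convexity (entropy/Young) argument. For $0<\gamma<1$ and $a,b>0$, concavity of $t\mapsto t^\gamma$ gives the pointwise tangent-line bound
\[
a^\gamma\le \gamma\, b^{\gamma-1}a+(1-\gamma)\,b^\gamma ,
\]
with equality iff $a=b$. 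We apply it with $a=\rho(z)$ and $b=V(z)$ for a candidate profile $V$. The profile is chosen so that $\gamma V^{\gamma-1}\Omega$ equals $\lambda(C+|z|^q)$, i.e.
\[
V=\Omega^{1/(1-\gamma)}(C+|z|^q)^{1/(\gamma-1)} .
\]
Up to constants this is exactly $W$ in \eqref{thm-1.optimal}, since $\Omega^{1/(1-\gamma)}=\omega_1^{\frac{1}{p(\gamma-1)}}\omega_2^{\frac{1}{p(1-\gamma)}}$.

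\textbf{Step 1: the entropy inequality.} Multiplying the pointwise bound by $\Omega\,\omega_1$ and integrating gives
\[
\int\rho^\gamma\Omega\,d\mu\le \gamma\int\rho\,(C+|z|^q)\,d\mu+(1-\gamma)\int V^\gamma\Omega\,d\mu .
\]
Equality holds iff $\rho=V$ a.e. The last term is finite precisely by the integrability assumptions. Using polar coordinates $z=r\theta$ and homogeneity, $V^\gamma\Omega\,\omega_1=(\text{angular factor})\cdot r^{\kappa}(C+r^q)^{\gamma/(\gamma-1)}$. The angular factor is a power of $\omega_1,\omega_2$ on $\mathbb{S}^{d-1}_E$, and we check that it is the one in \eqref{thm-1.integrability-condition}. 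The radial exponent is $\kappa=h_1+\frac{h_2-h_1}{p}\cdot\frac{1}{1-\gamma}+d-1$ (up to a check), and the radial integral $\int_0^\infty r^{\kappa}(C+r^q)^{\gamma/(\gamma-1)}\,dr$ converges at $0$ and at $\infty$ exactly under the two-sided condition \eqref{thm-1.necessary}. The same computation shows that $\int V\,d\mu$ and $\int V|z|^q\,d\mu$ are finite, which is needed for $V$ to be an admissible competitor.

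\textbf{Step 2: optimizing the parameters.} The right-hand side of Step 1 is not yet in product form. We use the invariances of \eqref{WCL}: replacing $\rho$ by $\lambda\rho(\cdot/\mu)$ multiplies each integral by a power of $\lambda$ and $\mu$ fixed by the homogeneities $h_1,h_2$. Equivalently, we keep $C$ and an amplitude as free parameters. We apply Step 1 to the rescaled $\rho$ and minimize the right-hand side over the scaling parameters. By the usual AM--GM/Young computation this turns the sum $A\int\rho\,d\mu+B\int\rho|z|^q\,d\mu$ into $c\,(\int\rho\,d\mu)^{\beta/(\alpha+\beta)}(\int\rho|z|^q\,d\mu)^{\alpha/(\alpha+\beta)}$, raised to the appropriate power. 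The exponents $\alpha,\beta$ are forced by scaling, and \eqref{thm-1.necessary} guarantees that both are positive, so the minimization is nondegenerate. This yields \eqref{WCL} with an explicit constant.

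\textbf{Step 3: sharpness.} For $\rho=W$ (with the matching $C$), Step 1 holds with equality. The optimal scaling parameters are the identity, since the first-order conditions of the minimization correspond to the virial-type identities satisfied by $W$; these follow from Beta-function expressions of the radial integrals. Hence equality holds in \eqref{WCL}, the constant is optimal, and $W$ is an optimizer. By the equality case of the tangent-line bound, any optimizer must coincide with a rescaled multiple of $W$.

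The main obstacle I expect is the bookkeeping in Step 2 together with the integrability analysis. One must verify that the scaling exponents produce exactly the exponents $\frac{\gamma\beta}{\alpha+\beta}$ and $\frac{\gamma\alpha}{\alpha+\beta}$. One must also check that the two bounds in \eqref{thm-1.necessary} are precisely the convergence conditions at $r\to0$ and $r\to\infty$, rather than merely sufficient ones. I would do the radial computation first, since it fixes all the exponents.
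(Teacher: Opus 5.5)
Your proposal is correct and is essentially the paper's proof: integrating the tangent-line bound at $V$ is exactly the non-negativity of the relative entropy $H[\rho\,|\,W]$ in \eqref{relative-entropy-gammma-less-1}, and your optimization over amplitude and dilation is equivalent to the paper's normalization $\int_E\rho\,\omega_1dz=\int_E W\omega_1dz$ (which removes the $C\int_E\rho\,\omega_1dz$ term) followed by minimization over the mass-preserving dilations $\rho_s(z)=s^{d+h_1}\rho(sz)$, with the two sides of \eqref{thm-1.necessary} giving both the integrability of $W$ and the positivity of the two scaling exponents. On your route the paper's preliminary H\"older splitting, which only gives a non-sharp constant, is unnecessary, and your uniqueness remark via the strict equality case of the tangent-line bound is valid under these hypotheses, whereas the paper proves uniqueness separately in Theorem~\ref{thm1.1} by an Euler--Lagrange argument.
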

Some remarks are in order. We first notice that Theorem~\ref{first-result:simple} gives an explicit and easy-to-verify condition for~\eqref{H_2} to hold. This characterisation has several applications to the weighted Gagliardo-Nirenberg inequalities~\eqref{eqGN} which we shall further explore in section~\ref{secgagnir}. As it will be clear from the proof, condition~\eqref{thm-1.necessary} is sufficient for inequality~\eqref{WCL} to hold. However, it is also a necessary condition for the optimizer $W$ to have finite weighted $q$-moment and weighted integrability. We also notice that the shape of the optimizer $W$ resembles the one of~\eqref{optimizer-simple-case}, with a corrective factor which depends on the weights. Assumption~\eqref{thm-1.integrability-condition} is more intriguing: it appears naturally in our proof but we do not have an intuitive explanation for it. When the weights $\omega_1 $ and $\omega_2$ are equal up to a constant ($\omega_1=\omega_2=\omega$ for some weight $\omega$), condition~\eqref{thm-1.integrability-condition} simply becomes $\omega \in \mathrm{L}^1(\mathbb{S}^{d-1}_{E}, dV_{\mathbb{S}^{d-1}})$, which seems quite reasonable. While assumption~\eqref{thm-1.integrability-condition} is sufficient for the existence of an optimizer (and therefore for~\eqref{H_2} to hold), it is also not far from being necessary. Indeed, if there exists an optimizer $\overline{W}$ for~\eqref{WCL} then (up to an integrability condition) assumption~\eqref{thm-1.integrability-condition} holds: this is contained in the following theorem.
\begin{theorem}\label{almost-optimality-condition-weights}
Assume that $\overline{W}$ is an optimizer for~\eqref{WCL} such that  $\int_{K}\overline{W}^{\gamma-1}(z)\,\omega_1^\frac{1}{q}(z)\,\omega_2^\frac{1}{p}(z)\,dz<\infty$ for any compact set $K\subset E$. Then
\begin{itemize}
	\item[i)] Up to a multiplicative constant $W=\overline{W}$, where $W$ is as in~\eqref{thm-1.optimal};
	\item[ii)] We have that $\omega_1^{1+\frac{1}{p(\gamma-1)}}\omega_2^\frac{1}{p(1-\gamma)}\in \mathrm{L}^1(\mathbb{S}^{d-1}_E, dV_{\mathbb{S}^{d-1}})$.
\end{itemize}
\end{theorem}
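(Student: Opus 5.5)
We will derive the Euler--Lagrange equation satisfied by an optimizer $\overline{W}$ and then solve it pointwise. The hypothesis $\int_K \overline{W}^{\gamma-1}\omega_1^{1/q}\omega_2^{1/p}\,dz<\infty$ is what makes this derivation legitimate.

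\emph{Step 1: first variation.} Write the functional as
\[
J(\rho)=\frac{\left(\int_E\rho\,\omega_1\right)^{\frac{\gamma\beta}{\alpha+\beta}}\left(\int_E\rho|z|^q\omega_1\right)^{\frac{\gamma\alpha}{\alpha+\beta}}}{\int_E\rho^\gamma\omega_1^{1/q}\omega_2^{1/p}}.
\]
Take a nonnegative $\varphi\in C_c(E)$ and the perturbation $\rho_\varepsilon=\overline{W}+\varepsilon\varphi$ for $\varepsilon>0$ small. We may also use $\varepsilon<0$ with $\varphi\le \overline{W}$ on its support.

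The derivative of $\int\rho_\varepsilon^\gamma\omega_1^{1/q}\omega_2^{1/p}$ at $\varepsilon=0$ equals $\gamma\int\overline{W}^{\gamma-1}\varphi\,\omega_1^{1/q}\omega_2^{1/p}$. This is finite by the local integrability assumption, and monotone convergence (by concavity of $t\mapsto t^\gamma$) justifies the differentiation. Optimality then yields, for a.e. $z$ where $\overline{W}>0$,
\[
a\,\omega_1(z)+b\,|z|^q\omega_1(z)=c\,\overline{W}^{\gamma-1}(z)\,\omega_1^{1/q}(z)\,\omega_2^{1/p}(z),
\]
with explicit constants $a,b,c>0$ built from the three integrals. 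On the set where $\overline{W}=0$, one-sided variations give only an inequality. However, $\overline{W}^{\gamma-1}=+\infty$ there, and this contradicts local integrability unless that set has measure zero. Hence $\overline{W}>0$ a.e. in $E$.

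\emph{Step 2: solve.} Since $1-\frac1q=\frac1p$, the identity gives
\[
\overline{W}^{\gamma-1}=\tfrac{b}{c}\,\omega_1^{1/p}\omega_2^{-1/p}\left(\tfrac ab+|z|^q\right).
\]
Raising to the power $1/(\gamma-1)$ yields exactly $W$ in \eqref{thm-1.optimal} with $C=a/b$, up to a multiplicative constant. This proves i).

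\emph{Step 3: integrability.} By optimality, $\int\overline{W}\,\omega_1<\infty$; otherwise the right-hand side of \eqref{WCL} would be infinite and $\overline{W}$ would not be a genuine optimizer. Substituting the formula,
\[
\overline{W}\omega_1=\mathrm{const}\cdot\omega_1^{1+\frac{1}{p(\gamma-1)}}\omega_2^{\frac{1}{p(1-\gamma)}}\,(C+|z|^q)^{\frac1{\gamma-1}}.
\]
In polar coordinates $z=r\theta$, homogeneity \eqref{homogenuity} factors this integral as
\[
\int_{\mathbb{S}^{d-1}_E}\omega_1^{1+\frac{1}{p(\gamma-1)}}\omega_2^{\frac{1}{p(1-\gamma)}}\,dV\cdot\int_0^\infty r^{d-1+\kappa}(C+r^q)^{\frac1{\gamma-1}}\,dr,
\]
where $\kappa$ is the resulting homogeneity degree. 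The radial factor is strictly positive, possibly $+\infty$ but never $0$. Finiteness of the product therefore forces the angular integral to be finite, which is ii). As a byproduct, finiteness of the radial factor gives the range \eqref{thm-1.necessary}.

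\emph{Main obstacle.} The delicate point is Step 1: the rigorous first variation, and in particular excluding the zero set of $\overline{W}$ and justifying the derivative of the $\gamma$-power term. The local integrability hypothesis is designed exactly for this. I would handle it via the difference quotients $((\overline W+\varepsilon\varphi)^\gamma-\overline W^\gamma)/\varepsilon$, which increase to $\gamma\overline W^{\gamma-1}\varphi$ as $\varepsilon\downarrow0$. One also needs that the denominator and the moments of $\overline{W}$ are finite and positive, which follows from $\overline{W}$ being a nonzero optimizer.
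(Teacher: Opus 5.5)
Your proof is correct, but it takes a different route from the paper's.

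\emph{The paper's route.} The paper does not differentiate the quotient in \eqref{WCL} directly. It invokes Remark~\ref{remark-optimisers} to regard (a rescaling of) $\overline W$ as a minimizer of the entropy $\mathcal F$ under the mass constraint. It then perturbs with the mass-normalized $\psi_\varepsilon=(\overline W+\varepsilon\varphi)/\int_E(\overline W+\varepsilon\varphi)\omega_1\,dz$, where $\varphi$ is bounded and supported in a superlevel set $A_\delta=\{\overline W\ge\delta\}$ and $|\varepsilon|<\delta$. Lebesgue differentiation gives $\overline W^{\gamma-1}=\omega_1^{1/p}\omega_2^{-1/p}(|z|^q-c_{\overline W})$, with a Lagrange constant of a priori unknown sign. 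The case $c_{\overline W}\ge 0$ is then excluded because $\overline W$ would not lie in $\mathrm{L}^1(E,\omega_1dz)$. Strict positivity of $\overline W$ is built into the definition of $\mathrm{G}^{p,\gamma}$, and on $A_\delta$ one has $\overline W^{\gamma-1}\le\delta^{\gamma-1}$. So the local integrability hypothesis plays no visible role in the paper's argument.

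\emph{What your route buys.} Varying the scale-invariant quotient directly is more self-contained: no entropy functional and no rescaling correspondence are needed. It shows exactly where the local integrability hypothesis is used, namely to differentiate the $\gamma$-term by monotone convergence and to force $\overline W>0$ a.e. It also gives the sign of the constant for free, since $C=a/b=\beta\int_E\overline W|z|^q\omega_1\,dz/(\alpha\int_E\overline W\omega_1\,dz)>0$. The paper's route, in exchange, ties the classification to the same entropy functional that produced the optimal constant in Theorem~\ref{thm1}.

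\emph{Two minor points.}

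First, for the variations with $\varepsilon<0$, the class of $\varphi\in C_c(E)$ with $0\le\varphi\le\overline W$ can be too thin when $\overline W$ is merely measurable. It contains only $\varphi\equiv0$ if $\overline W$ has zero essential infimum on every ball. Use bounded, compactly supported measurable test functions instead, such as $\chi_B\min\{1,\overline W\}$ (with $|\varepsilon|<1$) or $\chi_{B\cap A_\delta}$ (with $|\varepsilon|<\delta$). These keep $\overline W+\varepsilon\varphi>0$, and your monotone convergence argument goes through unchanged.

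Second, your closing byproduct is slightly off. Finiteness of $\int_E\overline W\omega_1\,dz$ gives \eqref{hp2} at the origin. At infinity, however, it gives only the weaker bound $\gamma>\frac{d+h_1/q+h_2/p-q}{d+h_1}$ appearing in \eqref{hp3.1}. The lower bound \eqref{hp3} comes from finiteness of the $q$-moment $\int_E\overline W|z|^q\omega_1\,dz$.
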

To conclude our remark about condition~\eqref{thm-1.integrability-condition}, we notice that it is possible to construct explicit examples of weights that do not satisfy assumption~\eqref{thm-1.integrability-condition}: we show in such a case that inequality~\eqref{WCL} does not hold; see Remark~\ref{rmkexample}. 

The condition $0<\gamma<1$ appears naturally in the proof of~\eqref{eqGN} both in the weighted case, see~\cite{Lam_2017,NGUYEN_2021,Balogh_2025}, or in the un-weighted case~\cite{Cordero_Erausquin_2004}. It is however possible to consider also the Carlson-Levin inequality when $\gamma>1$. In our setting, it amounts to the following: 
\begin{equation}\label{eq1.2}
	\mathcal{C}_{WCL}\left(\int_E \rho \omega_1dz\right)^{\gamma+\frac{\alpha}{q}}\leq \left(\int_E \rho^\gamma \omega_1^\frac{1}{q}\omega_2^\frac{1}{p}dz\right)\left(\int_E \rho |z|^{q}\omega_1dz\right)^\frac{\alpha}{q}\,, 
\end{equation}
where, as before, $p\in\left(1,\infty\right)$, $q=p/(p-1)$ and $\alpha$ is fixed by scaling. As in the case $\gamma<1$, inequality~\eqref{eq1.2} appears as a dual inequality for a family of weighted Gagliardo-Nirenberg inequalities, see~\cite[Theorem 1.3]{Balogh_2025}. Our main result for~\eqref{eq1.2} is similar to Theorem~\ref{first-result:simple}: we give necessary and sufficient condition for inequality~\eqref{eq1.2} to hold. In particular, we show that optimizer for~\eqref{eq1.2} are equal (up to a constant) to
\[
W(z)=\omega_1^\frac{1}{p(\gamma-1)}(z)\,\omega_2^\frac{1}{p(1-\gamma)}(z)\,\left(C-|z|^{q}\right)_+^\frac{1}{\gamma-1}\quad\text{for some}\,\,C>0\,,	
\]
see section~\eqref{sec>1}. The limit case $\gamma\to1$ can also be considered, which leads to the following inequality, also sometimes referred to as Shannon's inequality (see, for instance~\cite[Corollary 1.4]{Kubo_2018})
\begin{equation*}
		\frac{(d+h_1)}{\beta}\,\log\left(\mathrm{C}_W\,\frac{\beta e }{q}\int_E \frac{\rho\,|z|^q\,\omega_2}{\norm{\rho}_{\mathrm{L}^1_{\omega_1}(E)}}  dz\right)\norm{\rho}_{\mathrm{L}^1_{\omega_1}(E)}
		\geq -\int_E\rho\log\left(\frac{\rho}{\norm{\rho}_{\mathrm{L}^1_{\omega_1}(E)}}\right)\, \omega_1 dz\,,
\end{equation*}
where, in this case $\beta=q+h_2-h_1>0$, $\mathrm{C}_W=\int_{E}W\,\omega_1\,dz$ where $W$ is the optimal function given by
\[
	W(z)=\exp\left(-\frac{|z|^q}{q}\,\frac{\omega_2(z)}{\omega_1(z)}\right)\,;
\]
see Theorem~\ref{thm:gamma=1} for more details.

Finally, let us explain the main ideas of our method in order to prove Theorems~\ref{first-result:simple} and~\ref{almost-optimality-condition-weights}. Our intuition comes from the fast-diffusion equation\footnote{Perhaps the reader may be confused by the fact that the fast-diffusion equation is generally presented as $\partial_t u = \Delta u^m$ where $m\in\left(0,\infty\right)$. It is easy to see that the two versions are related by a simple change of variables, see for instance~\cite[section 2]{Bonforte_2025} or~\cite{VazquezBook2006,VazquezBook2007}}  which can be written as
\begin{equation}\label{FD}
\frac{\partial v}{\partial t}+\nabla\cdot\left(v\,\nabla v^{\gamma-1}\right)=2\,\nabla\cdot\left(z\,v\right)\,.
\end{equation}
In the case $\gamma<1$ and when posed on $\mathbb{R}^d$, it is not hard to see that the function $W(z)=\left(1+|z|^2\right)^\frac{1}{\gamma-1}$, defined in~\eqref{optimizer-simple-case}, is a stationary solution. Since the seminal papers of J. Ralston and W. I. Newmann~\cite{Ralston_1984,Newman_1984}, it is common knowledge that~\eqref{FD} admits an entropy functional\footnote{Roughly speaking, an entropy functional decreases in time if computed along the flow and controls the convergence to the equilibrium in the $L^1$ topology} that can be written for a function $v\ge0$ as
\[
	H[v|W]:=\frac{1}{\gamma-1}\int_{\mathbb{R}^d}\left(v^\gamma-W^\gamma-\gamma\,W^{\gamma-1}(v-W)\right)dz\,,
\]
where $W$ is as in~\eqref{optimizer-simple-case}. The main observation on which we built our proofs is the following (we assume that $\int_{\mathbb{R}^d}vdz=\int_{\mathbb{R}^d}Wdz$): the non-negativity of the entropy functional $H[v|W]\ge0$ is equivalent to the validity of inequality~\eqref{simple-CL}. From this observation it is possible to compute the optimal constant $\mathcal{C}_{CL}$ as well as to classify the optimizers. The same ideas applies to the weighted version of the Carlson-Levin inequality~\eqref{WCL}. 

Let us now conclude this introduction by establishing our notation and the plan of the paper. Let us recall that the functions $\omega_1,\omega_2:E\to\left[0,\infty\right)$ are non-negative, continuous ($\omega_i\in C(E)$ for $i=1,2$) and homogeneous of degree $h_i$ respectively, i.e.,~\eqref{homogenuity} holds, and $h_i>-d$ for $i=1,2$. The measure $dz$ shall denote the Lebesgue measure on $E\subset \mathbb{R}^d$. We shall denote by $\textrm{L}^p(E, \omega\,dz)$ the standard $\textrm{L}^p$ spaces of function defined on $E$ which are $p$-integrable with respect to the measure $\omega\,dz$. We notice that we shall also make use of $\mathrm{L}^\gamma$ spaces even when $\gamma<1$, while this is not necessarily a normed space, it will nevertheless play a role in our reasoning. In order to study inequality~\eqref{WCL} it is natural to introduce the following functional space
\begin{equation}\label{G}
\mathrm{G}^{p,\gamma}:=\{u:E\to\left(0, \infty\right): u\in\mathrm{L}^1(E, \omega_1dz))\cap \mathrm{L}^\gamma(E, \omega_1^\frac{1}{q}\omega_2^\frac{1}{p}dz)\quad\mbox{and}\quad \int_{E} u|z|^{q}\omega_1dz<+\infty\}
\end{equation}
where $\gamma \in (0,+\infty)$, $p\in(1,+\infty)$ and $q=\frac{p}{p-1}$. We notice that, if $\rho\in \mathrm{G}^{p,\gamma}$, all the integrals in~\eqref{WCL} are finite. 

We shall denote by $\mathbb{S}^{d-1}_E:=\mathbb{S}^{d-1}\cap E$ the intersection between the cone $E$ and the sphere, and by $dV_{\mathbb{S}^{d-1}}$ the Riemannian measure on the sphere (eventually restricted to $\mathbb{S}^{d-1}_E$). Lastly, we also define the quantity $c_{\omega_1,\omega_2}>0$ which shall often appear in the computations 
\begin{equation}\label{constant-on-sphere}
c_{\omega_1,\omega_2}:=\int_{\mathbb{S}^{d-1}_E}\omega_1^{1+\frac{1}{p(\gamma-1)}}\,\omega_2^\frac{1}{p(1-\gamma)}\,dV_{\mathbb{S}^{d-1}}\,.
\end{equation}

The paper is organized as follows. In section~\ref{sec:history} we detail a brief history of the Carlson-Levin inequality. 
In section \ref{sec<1} we prove Theorems~\ref{first-result:simple} and~\ref{almost-optimality-condition-weights} and explain in detail the role played by all the assumptions. In section \ref{sec>1} we state and prove the analogous results for the $\gamma>1$ case, while the limit case $\gamma\to1$ is treated in section~\ref{seclog}. Lastly, in section \ref{secgagnir} we show an application of our main result (Theorem~\ref{first-result:simple}) to the rigidity property of the weighted Gagliardo-Nirenberg inequality~\eqref{eqGN}, improving upon~\cite[Theorem 1.2]{Balogh_2025}.

\section{A brief history of the Carlson-Levin inequality and related results}\label{sec:history}
Inequality~\eqref{WCL} first appeared in literature in the work \cite{zbMATH03014034} by F. Carlson. At the beginning,  it had the form of an H\"older-type inequality for sequences, and then it was studied in the continuous setting under the Lebesgue measure. Thereafter, V.I. Levin in \cite{zbMATH03047408} proved its optimal form. \\
However, this inequality is called with many names in the literature, see for instance~\cite{Lutwak_2004} and the references in~\cite{Carrillo_2019}. It is also worth mentioning that similar inequalities are also treated in \cite{mitrinovic2012inequalities}.\\   \ \\
These inequalities have many applications. For instance, they imply the positivity of the relative entropy associated with a non-linear Fokker-Planck type equation, and they lead to a Heisenberg-type uncertainty principle. Also, they can show blow-up and concentration phenomena for some drift-diffusion equations. For more details see \cite{Kubo_2018,Ogawa_2022,Suguro_2022}. \\ \ \\
Concerning inequality \eqref{eqGN}, we remark that weighted Sobolev and Gagliardo Nirenberg inequalities have been extensively studied in recent years. For instance, X. Cabr\'e, X. Ros-Oton and J. Serra studied Sobolev inequalities with monomial weights and Isoperimetric ones with more general log concave weights, via the ABP method (see \cite{Cabr__2013,Cabr__2016}). Instead, an optimal transport approach for weighted isoperimetric inequalities can be found in \cite{Lam_2017} by N. Lam. Another important paper to mention is the one of G. Ciarolo, A. Figalli and A. Roncoroni \cite{Ciraolo_2020} where they proved weighted Sobolev inequalities for general norms in $\mathbb{R}^d$ still under a concavity condition on the weight. For the unweighted case and their connection with diffusion equations, see \cite{Del_Pino_2002, Dolbeault_2003} by M. Del Pino and J. Dolbeault
\\ \ \\
Unlike the aforementioned works, where the same weight is considered in every integral, other authors investigated weighted Sobolev inequalities with different weights in the $L^{p^*}$ norm of the function and the $L^2$ norm of the gradient. Apart from the more classical Caffarelli-Kohn-Nirenberg inequality (see \cite{caffarelli1984first}) more recent papers of this kind came out. An example is the work by H. Castro \cite{Castro_2016} where he considered Sobolev inequalities with different monomial weights. It is in this context that \cite[Theorem 1.2]{Balogh_2025} and Corollary \ref{corrig} can be applied to show that, if weights satisfy \eqref{eqmain}, then the inequalities cannot have an optimal function (unless the weights are equal). This is not the case for CKN weights, as a quick computation shows: under some hypotheses on the parameters, optimizers exist and are explicit.  \\ \ \\
At last, linked to our limiting case of $\gamma=1$ and of the Gagliardo-Nirenberg inequality, we remark that log-Sobolev inequalities have been extensively investigated too. In particular, in \cite{Balogh_2024}, again by using optimal transport theory, the authors proved the sharp version of a weighted $p$-log-sobolev inequality. They considered log-concave and homogeneous weights and completely classified optimizers for big dimensions, solving an open problem even for the unweighted case. 
%%%%%%%%%%%%%%%%%%%%%%%%%%%%%%%%%%%%%%%%%
%%%%%%%%%%%%%%%%%%%%%%%%%%%%%%%%%%%%%%%%%%

\section{The case \texorpdfstring{$0<\gamma<1$}{gamma<1}}\label{sec<1}
We recall that $\omega_1,\omega_2\in C(E)$ are non-negative and homogeneous weights with homogeneity $h_i>-d$ for any $i=1,2$; recall as well that $\mathrm{G}^{p,\gamma}$ is defined in~\eqref{G}. Lastly, we shall make use of the Beta function defined for any $z,w\in\mathbb{C}$ such that $\mathcal{R}e(z),\mathcal{R}e(w)>0$:
\begin{equation}\label{beta}
	B(z,w):=\int_0^1t^{z-1}(1-t)^{w-1}dt\,,
\end{equation}
see~\cite[section 6.2.1]{Handbook}.  Let us now state again our main theorem when $0<\gamma<1$ with more details.
\begin{theorem}\label{thm1}
Let $p\in\left(1,\infty\right)$, $q=p/(p-1)$ and $\gamma\in\left(0,1\right)$. 
Assume furthermore that 
\begin{align}
	&\omega_1^{1+\frac{1}{p(\gamma-1)}}\omega_2^\frac{1}{p(1-\gamma)}\in \mathrm{L}^1(\mathbb{S}^{d-1}_E)\label{hp1}\\
	&\gamma<\frac{d+\frac{h_1}{q}+\frac{h_2}{p}}{h_1+d}\label{hp2}\\
	&\gamma>\frac{d+\frac{h_1}{q}+\frac{h_2}{p}}{d+h_1+q}.\label{hp3}
\end{align}
Then~\eqref{WCL} is valid for all $\rho\in \mathrm{G}^{p,\gamma}$, the optimal constant being
\begin{align*}
    \mathcal{C}_{WCL} =\left( \frac{c_{\omega_1,\omega_2}}{q}\right)^{\gamma-1}(1+j\gamma -j)^{\frac{\alpha}{q}-1}(\gamma-j+\gamma j)^{\gamma-\frac{\alpha}{q}}\gamma^{-\gamma}  \frac{q}{\alpha}\left(\frac{\alpha}{q-\alpha}\right)^\frac{\alpha-q}{q} B\left( j;\frac{\gamma}{1-\gamma}-j\right)^{\gamma-1}\,,
\end{align*}
where
\begin{equation}\label{parameters-gamma-less-1}
    j:=\frac{d}{q}+\frac{h_1}{q}+\frac{h_1}{pq(\gamma-1)}+\frac{h_2}{pq(1-\gamma)},\quad \alpha:= d+\frac{h_1}{q}+\frac{h_2}{p}+\gamma(-d-h_1)\,,
\end{equation}
the function $B$ is as in~\eqref{beta}.  Furthermore, the function: 
\begin{equation}\label{eqoptimiser}
W(z):=\omega_1^\frac{1}{p(\gamma-1)}\omega_2^\frac{1}{p(1-\gamma)}(C+|z|^{q})^\frac{1}{\gamma-1}\quad\text{for some}\quad C>0\,,
\end{equation} 
is an optimizer for inequality~\eqref{WCL}.
\end{theorem}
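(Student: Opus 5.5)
The plan is the entropy argument sketched in the introduction, adapted to the weights. Set $\mu:=\omega_1^{1/q}\omega_2^{1/p}$. For $\rho\in\mathrm{G}^{p,\gamma}$ we use concavity of $s\mapsto s^\gamma$ on $(0,\infty)$, which gives the pointwise tangent-line bound
\[
\rho^\gamma\le W^\gamma+\gamma W^{\gamma-1}(\rho-W).
\]
This is nonnegativity of the weighted relative entropy $H[\rho|W]$. Integrating against $\mu\,dz$ gives
\[
\int_E\rho^\gamma\mu\,dz\le(1-\gamma)\int_E W^\gamma\mu\,dz+\gamma\int_E W^{\gamma-1}\rho\,\mu\,dz.
\]
The form of $W$ in~\eqref{eqoptimiser} is designed to linearize the last term. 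Since $W^{\gamma-1}=\omega_1^{1/p}\omega_2^{-1/p}(C+|z|^q)$, we get $W^{\gamma-1}\mu=\omega_1(C+|z|^q)$. Hence
\[
\int_E\rho^\gamma\mu\,dz\le(1-\gamma)\int_E W^\gamma\mu\,dz+\gamma C\int_E\rho\,\omega_1dz+\gamma\int_E\rho|z|^q\omega_1dz,
\]
with equality if and only if $\rho=W$ a.e. (where $\mu>0$). This is the key step. The first term on the right is a constant depending on $C$, which we must show is finite.

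To compute the constants I will use polar coordinates $z=r\sigma$ and the homogeneity~\eqref{homogenuity}. This gives
\[
W^\gamma\mu=\omega_1^{1+\frac{1}{p(\gamma-1)}}\omega_2^{\frac{1}{p(1-\gamma)}}(C+|z|^q)^{\frac{\gamma}{\gamma-1}},
\]
which is homogeneous in the weight factor of degree $h_1+\frac{h_2-h_1}{p(1-\gamma)}$. Therefore
\[
\int_E W^\gamma\mu\,dz=c_{\omega_1,\omega_2}\int_0^\infty r^{d-1+h_1+\frac{h_2-h_1}{p(1-\gamma)}}(C+r^q)^{\frac{\gamma}{\gamma-1}}dr.
\]
The substitution $t=r^q/(C+r^q)$ turns this into $\frac{c_{\omega_1,\omega_2}}{q}C^{j+\frac{\gamma}{\gamma-1}}B\bigl(j,\frac{\gamma}{1-\gamma}-j\bigr)$, with $j$ as in~\eqref{parameters-gamma-less-1}. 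Assumption~\eqref{hp1} makes $c_{\omega_1,\omega_2}$ finite. The Beta function requires $j>0$ and $\frac{\gamma}{1-\gamma}-j>0$. I expect these to be exactly~\eqref{hp2} and~\eqref{hp3}, possibly after using $h_i>-d$, and the integrability of $W$ and of $|z|^qW$ against $\omega_1$ (needed so that $W\in\mathrm{G}^{p,\gamma}$) gives Beta functions with shifted arguments. I would check these sign conditions carefully, since this is where the hypotheses enter. Similarly $\int_E W\omega_1$ and $\int_E W|z|^q\omega_1$ are multiples of $C^{j}$ and $C^{j+1}$ (times powers of $C^{1/(\gamma-1)}$).

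Next comes optimization over the free parameter $C$ and over scalings. Replacing $W$ by $\lambda W$ is equivalent to redoing the tangent bound at $\lambda W$, so we get the family of inequalities
\[
\int_E\rho^\gamma\mu\le(1-\gamma)\lambda^\gamma A(C)+\gamma\lambda^{\gamma-1}\Bigl(C\int_E\rho\,\omega_1+\int_E\rho|z|^q\omega_1\Bigr)
\]
for every $\lambda,C>0$, where $A(C)=\int_E W_C^\gamma\mu$ is computed above. Minimizing over $\lambda$ and then over $C$, with the explicit power dependence $A(C)=a\,C^{\kappa}$, produces the product form $M_0^{\gamma\beta/(\alpha+\beta)}M_q^{\gamma\alpha/(\alpha+\beta)}$ with the exponents fixed by scaling. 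The minimization is elementary calculus on functions of the form $x^{a}(c_1+c_2x)^{b}$. This yields the stated constant, with the factors $(1+j\gamma-j)$, $(\gamma-j+\gamma j)$ and $\left(\frac{\alpha}{q-\alpha}\right)^{(\alpha-q)/q}$ coming from the critical point. I expect the bookkeeping of these exponents, together with checking that the critical point is an interior minimum (which needs $0<\alpha<q$, again equivalent to~\eqref{hp2}–\eqref{hp3}), to be the main obstacle.

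Finally, for optimality: taking $\rho=W_C$ itself makes the tangent inequality an equality. The choices of $\lambda$ and $C$ are attained at $\lambda=1$ and at this $C$ after rescaling, since all equalities in the chain are then attained. So $W$ is an optimizer, and the constant is sharp.
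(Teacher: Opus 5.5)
Your argument is correct, but it organizes the optimization differently from the paper. The paper first gets a non-sharp \eqref{WCL} by H\"older on $E\cap B_R$ and $E\cap B_R^c$. It then restricts to $\rho$ with $\int_E\rho\,\omega_1dz=\int_E W\omega_1dz$, uses $\mathcal{F}[\rho]\ge\mathcal{F}[W]$, and minimizes over the mass-preserving dilations $\rho_s=s^{d+h_1}\rho(s\,\cdot)$ of $\rho$. You keep $\rho$ fixed and put the symmetry on the comparison function instead. Dilations of $W_1$ are, up to amplitude, exactly the $W_C$, so your family $\lambda W_C$ is the amplitude--dilation orbit of $W_1$, and your $(\lambda,C)$-optimization is dual to the paper's mass normalization plus $s$-optimization. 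What this buys is directness: optimizing in $\lambda$ gives
\[
\int_E\rho^\gamma\mu\,dz\le A(C)^{1-\gamma}\Bigl(\int_E\rho\,(C+|z|^q)\,\omega_1\,dz\Bigr)^{\gamma}.
\]
This is H\"older with exponents $1/\gamma,1/(1-\gamma)$, i.e. the sharp version of the paper's Step I, where $C+|z|^q$ is replaced by a splitting at radius $R$. So Steps I and II merge, no mass constraint is needed, and the equality case of H\"older identifies every optimizer with some $\lambda^*W_{C^*}$ on $\{\mu>0\}$. The paper's formulation in turn buys the relative-entropy/fast-diffusion interpretation. It also gives a framework it reuses for $\gamma>1$, for $\gamma=1$, and for Theorem~\ref{thm1.1}, where \eqref{hp1} is not assumed and $W$ is not available as a comparison function.

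Two small corrections. First, the $C$-minimization is of $C^{\frac{\alpha}{q}-\gamma}(CM_0+M_q)^{\gamma}$, so the interior-minimum condition is $0<\alpha<\gamma q$, not $0<\alpha<q$. Since $j=\frac{\alpha}{q(1-\gamma)}$, this is exactly $j>0$ and $\frac{\gamma}{1-\gamma}-j>0$, i.e. \eqref{hp2} and \eqref{hp3}. Second, your bookkeeping will produce
\[
\mathcal{C}_{WCL}=\Bigl(\frac{c_{\omega_1,\omega_2}}{q}B\bigl(j,\tfrac{\gamma}{1-\gamma}-j\bigr)\Bigr)^{\gamma-1}\gamma^{-\gamma}\Bigl(\frac{\alpha}{q}\Bigr)^{\frac{\alpha}{q}}\Bigl(\gamma-\frac{\alpha}{q}\Bigr)^{\gamma-\frac{\alpha}{q}},\qquad \gamma-\frac{\alpha}{q}=\gamma-j+\gamma j,
\]
with no factors $(1+j\gamma-j)$ or $\frac{\alpha}{q-\alpha}$ arising from your critical point. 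This agrees with the paper's own derivation $\mathcal{C}_{WCL}=M^{-1}\bigl(\int_E W\omega_1dz\bigr)^{\gamma-\frac{\alpha}{q}}$, since $(1+j\gamma-j)^{\frac{\alpha}{q}-1}\frac{\alpha}{q}\bigl(\frac{\alpha}{q-\alpha}\bigr)^{\frac{\alpha-q}{q}}=(\frac{\alpha}{q})^{\frac{\alpha}{q}}$. However, the displayed constant in the statement has $\frac{q}{\alpha}$ where $\frac{\alpha}{q}$ belongs, so it is too large by $(q/\alpha)^2$. For example, with $d=1$, $E=(0,\infty)$, $\omega_1=\omega_2=1$, $p=2$, $\gamma=\frac12$, the ratio at $W$ is $\pi^{-1/2}$, while the display gives $16\pi^{-1/2}$. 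So do not expect to match the display literally.
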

Some remarks are in order. Let us begin with assumption~\eqref{hp1}: in Remark \ref{rmkexample} we show that such an assumption is a necessary one. Inequality~\eqref{WCL} cannot hold if~\eqref{hp1} is removed. In the case $\omega_1=\omega_2=\omega$, assumption~\eqref{hp1}  simply tells that $\omega\in \mathrm{L}^1(\mathbb{S}^{d-1}_E)$. This assumption is also necessary to the existence of an optimizer, which is proven in Theorem~\ref{thm1.1}. 

Let us also discuss the integrability properties of the optimizer $W$. Conditions\eqref{hp2} and~\eqref{hp3} are necessary for the integrability of $W$ at zero and at infinity and are heavily used in the proof of the equality itself. Assumption~\eqref{hp3} also implies that 
\begin{align}\label{hp3.1}
	1>\frac{d+\frac{h_1}{q}+\frac{h_2}{p}}{d+h_1+q}\Rightarrow q>-\frac{h_1}{p}+\frac{h_2}{p}\Rightarrow \frac{d+\frac{h_1}{q}+\frac{h_2}{p}}{d+h_1+q}>\frac{d+\frac{h_1}{q}+\frac{h_2}{p}-q}{d+h_1}\,.
\end{align} 
This chain of inequalities, together with \eqref{hp1}, \eqref{hp2}, \eqref{hp3} ensures that $W\in \mathrm{G}^{p,\gamma}$. 
\begin{proof}[Proof of Theorem \ref{thm1}]
The strategy for the proof of goes as follows. At first we prove, via H\"older's inequality, that~\eqref{WCL} holds for a constant $C>0$. We then find the optimal constant and (one of) the optimizers by relating~\eqref{WCL} to a (weighted) entropy functional similar to the functional $H$ defined in the introduction. \\
\textit{Step I: Inequality \eqref{WCL} holds for some constant $C>0$}. Let us begin by proving that inequality \eqref{WCL} holds, without obtaining the optimal constant. Our proof is inspired by~\cite[Lemma 5]{Carrillo_2019}. Fix $R>0$ and consider
\begin{align*}
	\int_{E}\rho^\gamma \omega_1^\frac{1}{q}\omega_2^\frac{1}{p}dz=\int_{E\cap B_R}\rho^\gamma \omega_1^\frac{1}{q}\omega_2^\frac{1}{p}dz+\int_{E\cap B_R^c}\rho^\gamma \omega_1^\frac{1}{q}\omega_2^\frac{1}{p}dz=:I_R+II_R.
\end{align*}
Then, by using H\"older's inequality on $I_R$ we get:
\begin{align*}
	I_R=\int_{E\cap B_R} \rho^\gamma \omega^\gamma_1\omega_1^{\frac{1}{q}-\gamma}\omega_2^\frac{1}{p}dz\leq \left(\int_{E\cap B_R} \rho \omega_1dz\right)^\gamma\left(\int_{E\cap B_R}\omega_1^{1+\frac{1}{p(\gamma-1)}}\omega_2^\frac{1}{p(1-\gamma)}dz\right)^{1-\gamma}\,.
\end{align*}
By integrating in spherical coordinates the term $\int_{E\cap B_R}\omega_1^{1+\frac{1}{p(\gamma-1)}}\omega_2^\frac{1}{p(1-\gamma)}dz$ we get
\begin{align*}
	\int_{E\cap B_R}\omega_1^{1+\frac{1}{p(\gamma-1)}}\omega_2^\frac{1}{p(1-\gamma)}dz&=\int_{0}^R r^{\frac{\alpha}{1-\gamma}-1}dr\int_{\mathbb{S}^{d-1}_E}\omega_1^{1+\frac{1}{p(\gamma-1)}}\omega_2^\frac{1}{p(1-\gamma)}dV_{\mathbb{S}^{d-1}}=\frac{c_{\omega_1,\omega_2}(1-\gamma)}{\alpha}\,R^{\frac{\alpha}{1-\gamma}}\,,
\end{align*}
where in the last inequality we have used~\eqref{hp1}, $c_{\omega_1,\omega_2}$ is as in~\eqref{constant-on-sphere} and $\alpha$ as in~\eqref{parameters-gamma-less-1}. We conclude that 
\begin{align*}
	I_R\leq \left(\frac{c_{\omega_1,\omega_2}(1-\gamma)}{\alpha}\right)^{1-\gamma}\left(\int_{E} \rho \omega_1dz\right)^\gamma R^{\alpha}=: A R^\alpha\,.
\end{align*}
Performing similar computations we can bound $II_R$ similarly
\begin{align*}
	II_R\leq \left(\frac{c_{\omega_1,\omega_2}(1-\gamma)}{\beta}\right)^{1-\gamma} \left(\int_{E}\rho \omega_1 |z|^{q}dz\right)^\gamma R^{-\beta}=:B\,R^{-\beta},
\end{align*}
with $-\beta:= d+\frac{h_1}{q}+\frac{h_2}{p}+\gamma(-d-h_1-q)<0$ from \eqref{hp3}. Using the above two estimates, we find that, for any $R>0$,
\begin{align*}
	\int_E \rho^\gamma \omega_1^\frac{1}{q}\omega_2^\frac{1}{p} dz\leq A\,R^\alpha+B\,R^{-\beta}:=f(R).
\end{align*}
By taking the infimum of $f$ on $\left[0,\infty\right)$ we obtain
\begin{align*}
	\int_E \rho^\gamma \omega_1^\frac{1}{q}\omega_2^\frac{1}{p}dz \leq C\,\left(\int_E \rho \omega_1dz\right)^\frac{\gamma\beta}{\alpha+\beta}\left(\int_E \rho \omega_1|z|^{q}dz\right)^\frac{\gamma\alpha}{\alpha+\beta}
\end{align*}
where 
\begin{align*}
	C=\left(\int_{\mathbb{S}^{d-1}_E}\omega_1^{1+\frac{1}{p(\gamma-1)}}\omega_2^\frac{1}{p(1-\gamma)}dV_{\mathbb{S}^{d-1}}\right)^{1-\gamma}\left(\left(\frac{\beta}{\alpha}\right)^\frac{\alpha}{\alpha+\beta}+\left(\frac{\alpha}{\beta}\right)^\frac{\beta}{\alpha+\beta}\right).
\end{align*}
\textit{Step II: Computation of the optimal constant.} The above procedure proves that inequality~\eqref{WCL} holds under the assumptions~\eqref{hp1},~\eqref{hp2} and~\eqref{hp3}; it does not, however, give us the optimal constant. For this purpose, let us define the entropy functional (inspired by~\cite[section 2]{Bonforte_2025} and references therein):
\begin{equation*}
	\mathcal{F}[\rho]:=\frac{1}{\gamma-1}\int_E \rho^\gamma  \omega_1^\frac{1}{q}\omega_2^\frac{1}{p}dz-\frac{\gamma}{\gamma-1}\int_E \rho |z|^{q}\omega_1 dz
\end{equation*}
for all $\rho\in \mathrm{G}^{p,\gamma}$ such that $\int_E \rho \omega_1 dz=\int_E W \omega_1 dz$, where $W$ is an in~\eqref{eqoptimiser}. The convexity of the function $x\mapsto\frac{x^\gamma}{\gamma-1}$ implies that 
\begin{equation}\label{eqconv}
	\frac{\rho^\gamma}{\gamma-1}-\frac{W^\gamma}{\gamma-1}-\gamma\frac{W^{\gamma-1}}{\gamma-1}(\rho-W)\geq 0\quad\mbox{a.e. in}\quad E\,.
\end{equation}
Integrating \eqref{eqconv} with respect to $\omega_1^\frac{1}{q}\omega_2^\frac{1}{p}dz$ and using the fact that $W^{\gamma-1}=\omega_1^\frac{1}{p}\omega_2^{-\frac{1}{p}}(C+|z|^q)$ and $\int_E \rho \omega_1 dz=\int_E W \omega_1 dz$, we obtain that the relative entropy function is non-negative
\begin{equation}\label{relative-entropy-gammma-less-1}
H[\rho\lvert W]:=\mathcal{F}[\rho]-\mathcal{F}[W]\geq 0\qquad\forall \rho\in \mathrm{G}^{p,\gamma}\quad\text{such that }\quad\int_E \rho\,\omega_1 dz=\int_E W\,\omega_1 dz.
\end{equation}  
Fix such a $\rho$ and define $\rho_s(z):=s^{d+h_1}\rho(sz)$ for any $s>0$. A simple computation shows that $\int_E \rho_s \omega_1 dz=\int_E \rho \omega_1 dz$. The entropy functional could be easily computed on the function $\rho_s$
\begin{align*}
\mathcal{F}[\rho_s]&=\frac{1}{\gamma-1}s^{(h_1+d)\gamma-d-\frac{h_1}{q}-\frac{h_2}{p}}\int_E \rho^\gamma  \omega_1^\frac{1}{q}\omega_2^\frac{1}{p}dz-\frac{\gamma}{\gamma-1}s^{-q}\int_E \rho |z|^{q}\omega_1 dz\\
&=\frac{1}{\gamma-1}s^{-\alpha}K_\rho-\frac{\gamma}{\gamma-1}s^{-q}H_\rho=\frac{1}{\gamma-1}s^{-q}(s^{q-\alpha}K_\rho-\gamma H_\rho)=:g_\rho(s)\,
\end{align*}
where $K_\rho=\int_E \rho^\gamma  \omega_1^\frac{1}{q}\omega_2^\frac{1}{p}dz$ and $H_\rho=\int_E \rho |z|^{q}\omega_1 dz$. Remark that \eqref{hp3} implies that $q-\alpha>0$. A simple computation shows that the function $g_\rho(s)$  has its minimum at $s^*:=\left(q\gamma H_\rho/\alpha K_\rho\right)^\frac{1}{q-\alpha}$ and that 
\begin{equation*}
0>g_\rho(s^*)=\mathcal{F}[\rho_{s^*}]=\frac{1}{\gamma-1}\left[\frac{1}{\gamma^\frac{\alpha}{q}}\frac{\alpha}{q}\left(\frac{q-\alpha}{\alpha}\right)^\frac{q-\alpha}{q}K_\rho H_\rho^{-\frac{\alpha}{q}}\right]^\frac{q}{q-\alpha}\geq \mathcal{F}[W]\,,
\end{equation*}
where the last inequality is implied by~\eqref{relative-entropy-gammma-less-1}. 
By considering that $\mathcal{F}[\rho]=g_\rho(1)\ge g_\rho(s^\star)$ we obtain:
\begin{equation*}
\frac{\int_E \rho^\gamma  \omega_1^\frac{1}{q}\omega_2^\frac{1}{p}dz}{\left(\int_E \rho |z|^{q}\omega_1 dz\right)^\frac{\alpha}{q}}\leq\left[(\gamma-1)\mathcal{F}[W]\right]^\frac{q-\alpha}{q}\gamma^\frac{\alpha}{q}\frac{q}{\alpha}\left(\frac{\alpha}{q-\alpha}\right)^\frac{q-\alpha}{q}=:M\,.
\end{equation*}
Repeating the argument with $W$ in place of $\rho$ we obtain
\begin{align*}
\frac{\int_E W^\gamma  \omega_1^\frac{1}{q}\omega_2^\frac{1}{p}dz}{\left(\int_E W |z|^{q}\omega_1 dz\right)^\frac{\alpha}{q}}=M\,,
\end{align*}
deducing that 
\begin{equation*}
\mathcal{C}_{WCL}=M^{-1}\left(\int_E W \omega_1 dz\right)^{\gamma-\frac{\alpha}{q}}
\end{equation*}
is the optimal constant in \eqref{WCL} and attained by $W$ itself.
Let us now compute $\mathcal{C}_{WCL}$ explicitly. Without loss of generality, we can assume $W(z)=\omega_1^\frac{1}{p(\gamma-1)}\omega_2^\frac{1}{p(1-\gamma)}(1+|z|^{q})^\frac{1}{\gamma-1}$.
Then,
\begin{align*}
    \int_E W^\gamma  \omega_1^\frac{1}{q}\omega_2^\frac{1}{p}dz&=c_{\omega_1,\omega_2}\int_0^{\infty}(1+r^{q})^\frac{\gamma}{\gamma-1}r^{d-1+h_1+\frac{h_1}{p(\gamma-1)}+\frac{h_2}{p(1-\gamma)}}dr\\&= \frac{c_{\omega_1,\omega_2}}{q}\int_{0}^{\infty}(1+t)^\frac{\gamma}{\gamma-1}t^{\frac{d}{q}+\frac{h_1}{q}+\frac{h_1}{pq(\gamma-1)}+\frac{h_2}{pq(1-\gamma)}-1}dt\\
    &= \frac{c_{\omega_1,\omega_2}}{q}B\left(j;\frac{\gamma}{1-\gamma}-j\right)\,,
\end{align*}
where in the last identity we have used the change of variables $s=t/(1+t)$. Similarly, we find that
\begin{equation}
\int_E W |z|^{q}\omega_1 dz=\frac{c_{\omega_1,\omega_2}}{q}B\left(1+j;\frac{\gamma}{1-\gamma}-j\right)\quad\mbox{and}\quad\int_E W \omega_1 dz=\frac{c_{\omega_1,\omega_2}}{q} \left(\frac{\gamma-j+\gamma j}{\gamma}\right)B\left( j;\frac{\gamma}{1-\gamma}-j\right)\,.
\end{equation}
Therefore, we find that
\begin{align*}
\mathcal{F}[W]&=c_{\omega_1,\omega_2}\left[\frac{1}{(\gamma-1)q}B\left(j;\frac{\gamma}{1-\gamma}-j\right)-\frac{\gamma}{(\gamma-1)q}B\left(1+j;\frac{\gamma}{1-\gamma}-j\right)\right]\\
&=\frac{c_{\omega_1,\omega_2}}{q}\left( \frac{1-j+\gamma j}{\gamma-1}\right)B\left(j;\frac{\gamma}{1-\gamma}-j \right).
\end{align*}
where in the last inequality we used the property of the Beta function $B(x+1;y)=\frac{x}{x+y}B(x;y)$. Remark that \eqref{hp3} and \eqref{hp3.1} imply that $\frac{1}{\gamma-1}+j<0$. The proof is concluded.
\end{proof}
The next part of the section is devoted to showing that assumption~\eqref{hp1} of local integrability of weights is indeed necessary for the validity of\eqref{WCL} and the existence of optimizers. This can be shown by simply taking $\omega_1$ and $\omega_2$ as monomial weights. For this purpose, define the quotient 
 \begin{equation}\label{quotient}
	\mathcal{Q}_{\omega_1, \omega_2}:=\sup_{\substack{\rho\in\mathrm{G}^{p,\gamma}\\\int_{E}\rho\omega_1dz=1}} \frac{\int_{E} \rho^\gamma  \omega_1^\frac1{q}\omega_2^\frac1{p}dz}{
\left(\int_{E} \rho|z|^{q}\omega_1dz\right)^\frac{\alpha}{q}
}.
\end{equation}
\begin{remark}\label{rmkexample} \normalfont Via a counterexample, let us point out the necessity of hypothesis \eqref{hp1}.\\ Let $p=2$, $d\ge3$, $\gamma\in\left(0,1\right)$ and $E=\{z=(z_1, \cdots, z_d)\in\mathbb{R}^d:z_1>0\}$. Set $\omega_1(z):=z_1^{-2}$, $\omega_2(z):=z_1^{-2+\sigma}$ with 
	\begin{equation*}
		0\leq\sigma<2\min\{\gamma,1-\gamma\}.
	\end{equation*}
	Then, $$\omega_1^\frac{2\gamma-1}{2(\gamma-1)}\omega_2^\frac{1}{2(1-\gamma)}=z_1^{-2+\frac{\sigma}{2(1-\gamma)}}\notin \mathrm{L}^1(\mathbb{S}^{d-1}_E)$$ since, by assumption we have $-2+\frac{\sigma}{2(1-\gamma)}<-1$. Consider $\phi\in C^\infty_c(B_2)$ such that $\phi=1$ in $B_1$ and $\phi\geq 0$ in $B_2$. Fix $\delta$ such that 
	\begin{equation*}
		\frac{\sigma}{2}< \delta <\min\{\gamma,1-\gamma\} 
	\end{equation*}
	and $\varepsilon\geq0$. Define
	\begin{align*}
		\psi_\varepsilon:=\phi ^\frac{1}{\gamma}z_1^\frac{1+\delta}{\gamma}(z_1^2+\varepsilon^2)^{-\frac{\delta}{\gamma}},\qquad c_\varepsilon:=\int_E\psi_\varepsilon \omega_1 dz <+\infty, \qquad u_\varepsilon(z):=\frac{\psi_\varepsilon(z)}{c_\varepsilon}.
	\end{align*}
	Remark that $u_\varepsilon\in \mathrm{L}^1(E,|z|^2\omega_1dz)\cap \mathrm{L}^1(E,\omega_1dz)$ for any $\varepsilon\geq 0$ and $c_\varepsilon$ has an upper-bound independent of $\varepsilon$. Also, $u_\varepsilon\in \mathrm{L}^\gamma(E,\omega_1^\frac{1}{2}\omega_2^\frac{1}{2}dz)$ for any $\varepsilon>0$ but $u_0\notin \mathrm{L}^\gamma(E,\omega_1^\frac{1}{2}\omega_2^\frac{1}{2}dz)$. By considering the sequence $\{u_\varepsilon\}$ as $\varepsilon\rightarrow 0$ we obtain $\mathcal{Q}_{\omega_1, \omega_2}=+\infty$ in \eqref{quotient} (by monotone convergence theorem). Therefore, inequality \eqref{WCL}  cannot hold without hypothesis \eqref{hp1}.
\end{remark}
We conclude the section proving the uniqueness of optimizers under an integrability condition. At first, let us briefly observe the relation among optimizers for the relative entropy $H[\cdot\,\lvert W]$ and those for inequality~\eqref{WCL}.
\begin{remark}\label{remark-optimisers} \normalfont
Fix $W$ as defined in \eqref{eqoptimiser}. If $\rho\in  \mathrm{G}^{p,\gamma}$ such that $\int_E \rho \omega_1 dz=\int_E W \omega_1 dz$ and $H[\rho\lvert W]=0$ then $\rho$ is an optimal function for \eqref{WCL}. Conversely, if $\rho\in  \mathrm{G}^{p,\gamma}$ such that $\int_E \rho \omega_1 dz=\int_E W \omega_1 dz$ is an optimal function for \eqref{WCL} then $H[\rho_{s^*}\lvert W]=0$, where $\rho_{s^*}$ is defined as in the proof of Theorem \ref{thm1}.
 \end{remark}
\begin{theorem}\label{thm1.1}
Suppose that \eqref{hp2} and \eqref{hp3} hold and that  $\overline{W}\in \mathrm{G}^{p,\gamma}$ is an optimizer for~\eqref{WCL} such that  $\int_{K}\overline{W}^{\gamma-1}(z)\,\omega_1^\frac{1}{q}(z)\,\omega_2^\frac{1}{p}(z)\,dz<\infty$ for any compact set $K\subset E$.
\begin{itemize}
	\item[i)] Up to a multiplicative constant $\overline{W}=W$, where $W$ is as in~\eqref{thm-1.optimal};
	\item[ii)] We have that $\omega_1^{1+\frac{1}{p(\gamma-1)}}\omega_2^\frac{1}{p(1-\gamma)}\in \mathrm{L}^1(\mathbb{S}^{d-1}_E, dV_{\mathbb{S}^{d-1}})$.
\end{itemize}
\end{theorem}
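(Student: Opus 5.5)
The plan is to derive the Euler--Lagrange equation for an optimizer $\overline{W}$ of~\eqref{WCL}, solve it pointwise, and then read off both i) and ii). Write $w:=\omega_1^{\frac1q}\omega_2^{\frac1p}$, and set
\[
K:=\int_E\overline{W}^\gamma w\,dz,\qquad M_0:=\int_E\overline{W}\omega_1\,dz,\qquad M_1:=\int_E\overline{W}|z|^q\omega_1\,dz .
\]
All three are finite and positive because $\overline{W}\in\mathrm{G}^{p,\gamma}$ and $\overline{W}>0$. Optimality means that $\overline{W}$ maximizes
\[
\mathcal{J}[\rho]:=\log\int_E\rho^\gamma w\,dz-\tfrac{\gamma\beta}{\alpha+\beta}\log\int_E\rho\,\omega_1\,dz-\tfrac{\gamma\alpha}{\alpha+\beta}\log\int_E\rho|z|^q\omega_1\,dz .
\]

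\textbf{Step 1 (first variation).} To stay inside the admissible class (positive functions) without any lower bound on $\overline{W}$, I would perturb multiplicatively: $\rho_\varepsilon=\overline{W}(1+\varepsilon\psi)$, with $\psi\in L^\infty(E)$ supported in a compact $K_0\subset E$ and $|\varepsilon|\,\|\psi\|_\infty<1/2$. Then $\rho_\varepsilon>0$ and $\rho_\varepsilon\in\mathrm{G}^{p,\gamma}$. Differentiation under the integral sign at $\varepsilon=0$ is justified by dominated convergence.
\begin{itemize}
\item For the first term, $|\partial_\varepsilon\rho_\varepsilon^\gamma|\le C\,\overline{W}^\gamma|\psi|$.
\item For the other two terms the derivatives are $\overline{W}\psi\omega_1$ and $\overline{W}\psi|z|^q\omega_1$.
\end{itemize}
The hypothesis $\int_{K}\overline{W}^{\gamma-1}w<\infty$ lets me also take additive perturbations $\overline{W}+\varepsilon\varphi$ with $\varphi\ge0$, $\varphi\in C_c(E)$. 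This is a convenient backup if the multiplicative class turns out not to be rich enough, and it is exactly what makes $\int\overline{W}^{\gamma-1}w\varphi$ finite. Setting $\frac{d}{d\varepsilon}\mathcal{J}[\rho_\varepsilon]\big|_{\varepsilon=0}=0$ for every such $\psi$ gives
\[
\int_E\Big(\tfrac{\gamma}{K}\overline{W}^{\gamma-1}w-\tfrac{\gamma\beta}{(\alpha+\beta)M_0}\omega_1-\tfrac{\gamma\alpha}{(\alpha+\beta)M_1}|z|^q\omega_1\Big)\overline{W}\psi\,dz=0 .
\]
Since $\overline{W}>0$ and $\psi$ is arbitrary, the bracket vanishes a.e. in $E$.

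\textbf{Step 2 (solving).} Solving the bracket for $\overline{W}$ gives a.e.
\[
\overline{W}^{\gamma-1}=\omega_1^{\frac1p}\omega_2^{-\frac1p}\,(a+b|z|^q),\qquad a=\tfrac{\beta K}{(\alpha+\beta)M_0}>0,\quad b=\tfrac{\alpha K}{(\alpha+\beta)M_1}>0 .
\]
Raising to the power $1/(\gamma-1)$ yields
\[
\overline{W}=b^{\frac1{\gamma-1}}\,\omega_1^{\frac{1}{p(\gamma-1)}}\omega_2^{\frac{1}{p(1-\gamma)}}\,(C+|z|^q)^{\frac1{\gamma-1}},\qquad C=a/b>0,
\]
which is $W$ as in~\eqref{thm-1.optimal} up to a multiplicative constant. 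This proves i). The signs of $a$ and $b$ come for free, because $\alpha,\beta>0$ under~\eqref{hp2}--\eqref{hp3}. So the only care needed in this step is in the bookkeeping of the multipliers.

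\textbf{Step 3 (integrability of the weights).} Integrate in spherical coordinates and use the homogeneity~\eqref{homogenuity}, exactly as in the proof of Theorem~\ref{thm1}:
\[
\infty>\int_E\overline{W}\omega_1\,dz=b^{\frac1{\gamma-1}}\Big(\int_{\mathbb{S}^{d-1}_E}\omega_1^{1+\frac1{p(\gamma-1)}}\omega_2^{\frac1{p(1-\gamma)}}dV_{\mathbb{S}^{d-1}}\Big)\int_0^\infty(C+r^q)^{\frac1{\gamma-1}}r^{qj-1}\,dr .
\]
The radial integral is strictly positive, and it is finite by~\eqref{hp2}--\eqref{hp3}; in any case positivity alone suffices, since Tonelli's theorem gives the factorization even if a factor is infinite. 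Hence the spherical factor is finite, which is ii).

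I expect the main obstacle to be Step 1: choosing a class of perturbations that stays in $\mathrm{G}^{p,\gamma}$ and admits a rigorous differentiation of the $\rho^\gamma$ term when $\gamma<1$. The multiplicative perturbation is my first attempt. If the resulting a.e. identity needs to be tested against general $\varphi$, the local integrability hypothesis on $\overline{W}^{\gamma-1}w$ is precisely what guarantees that the linearized term is finite on compact sets.
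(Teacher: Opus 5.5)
Your proposal is correct and follows the paper's route: a first variation at the optimizer, pointwise solution of the resulting Euler--Lagrange identity to get the form of $W$, and then spherical coordinates to read off ii). The differences are only technical. You vary the logarithm of the quotient with multiplicative perturbations $\overline{W}(1+\varepsilon\psi)$, whereas the paper varies the entropy $\mathcal{F}$ with additive perturbations supported on superlevel sets $A_\delta$. Your choice gives the identity a.e. on all of $E$ and yields $C>0$ directly from $\alpha,\beta>0$, instead of from the paper's integrability argument. It also never uses the hypothesis $\int_K\overline{W}^{\gamma-1}\omega_1^{1/q}\omega_2^{1/p}\,dz<\infty$.
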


\begin{proof}[Proof of Theorem \ref{thm1.1}]
 Call $\overline{W}$ an optimizer of \eqref{WCL} and without loss of generality suppose that $\int_E \overline{W} \omega_1dz=1$. 
Fix $\delta >0$ and define
\begin{equation*}
	A_\delta:=\{z\in E: \overline{W}(z)\geq \delta \}.
\end{equation*}
Observe that, since $\overline{W}\neq0$ there exists at least a small $\delta>0$ for which $A_\delta \neq \emptyset$.
Take $\varphi\in \mathrm{G}^{p,\gamma}\cap L^\infty$ such that $\norm{\varphi}_{L^\infty}\leq 1$ and $\varphi=0$ in $A_\delta^c$. Therefore, the function $\overline{W}+\varepsilon \varphi$ is non-negative for $\varepsilon\in (-\delta,\delta)$. Since by hypothesis, up to a rescaling, $\overline{W}$ minimizes the entropy $\mathcal{F}[\cdot]$, we get by Fermat's theorem
\begin{align*}
	\frac{d}{d\varepsilon}\mathcal{F}[\psi_\varepsilon]\lvert_{\varepsilon=0}=0, \quad\text{where }\psi_\varepsilon:=\frac{\overline{W}+\varepsilon \varphi}{\int_{E}(\overline{W}+\varepsilon \varphi)\omega_1 dz}.
\end{align*}
Performing the computations we get 
\begin{equation*}
	\int_E \varphi[\omega_1(c_{\overline{W}}-|z|^{q})+\overline{W}^{\gamma-1}\omega_1^\frac{1}{q}\omega_2^\frac{1}{p}]dz=0\quad\text{where $c_{\overline{W}}$ is a real constant depending on $\overline{W}$}.
\end{equation*}
If we then impose $\varphi=\chi_{B_r(\bar{z})\cap A_\delta}$ for $\bar{z}\in A_\delta$ and $r$ such that $B_r(\bar{z})\subseteq E$, we obtain
\begin{equation*}
    \int_{B_r(\bar{z})}\chi_{\{\overline{W}\geq \delta\}}[\omega_1(c_{\overline{W}}-|z|^{q})+\overline{W}^{\gamma-1}\omega_1^\frac{1}{q}\omega_2^\frac{1}{p}]dz=0.
\end{equation*}
Therefore, Lebesgue's differentiation theorem says that
\begin{equation*}
    \overline{W}^{\gamma-1}=\omega_1^\frac{1}{p}\omega_2^{-\frac{1}{p}}(-c_{\overline{W}}+|z|^{q})\quad\text{a.e. in }\{\overline{W}>0\}.
\end{equation*}
This last equality implies that $c_{\overline{W}}<0$ otherwise $\overline{W}\notin L^1(E,\omega_1dz)$. As a consequence,
\begin{equation*}
    \overline{W}=\omega_1^\frac{1}{p(\gamma-1)}\omega_2^\frac{1}{p(1-\gamma)}(C+|z|^{q})^\frac{1}{\gamma-1}\quad\text{for }C>0
\end{equation*}
and the validity of \eqref{hp1} follows since $\overline{W}\in \mathrm{G}^{p,\gamma} $.
\end{proof}
%%%%%%%%%%%%%%%%%%%%%%%%%%%%%%%%%%%%%%%%%
%%%%%%%%%%%%%%%%%%%%%%%%%%%%%%%%%%%%%%%%%%
\section{The case \texorpdfstring{$\gamma>1$}{gamma>1}}\label{sec>1}
We begin by stating with full details our main theorem in this case.
\begin{theorem}\label{thm2}
Let $p\in\left(1,\infty\right)$, $q=p/(p-1)$, $\gamma>1$, and
\begin{equation}\label{second-condition}
\omega_1^{1+\frac{1}{p(\gamma-1)}}\omega_2^\frac{1}{p(1-\gamma)}\in \mathrm{L}^1(\mathbb{S}^{d-1}_E, dV_{\mathbb{S}^{d-1}})\quad\mbox{and}\quad \gamma > \frac{d+\frac{h_1}{q}+\frac{h_2}{p}}{d+h_1}\,.
\end{equation}
Then~\eqref{eq1.2} is valid for all $\rho\in \mathrm{G}^{p,\gamma}$ and, up to a multiplicative constant, all the optimizers have the following form
\begin{equation*}
	W(z)=\omega_1^\frac{1}{p(\gamma-1)}\omega_2^\frac{1}{p(1-\gamma)}(C-|z|^{q})_+^\frac{1}{\gamma-1}\quad\text{for some}\quad C>0.
\end{equation*}  
The optimal constant in~\eqref{eq1.2} is equal to
\begin{equation*}
    \mathcal{C}_{WCL}=\left(\frac{c_{\omega_1,\omega_2}}{q}\right)^{1-\gamma}\gamma \left[\left(\frac{q}{\alpha}\right)^\frac{\alpha}{q+\alpha}+\left(\frac{\alpha}{q}\right)^\frac{q}{q+\alpha}\right]^{-\frac{q+\alpha}{q}} B\left( j;\frac{\gamma}{\gamma-1}\right)^{1-\gamma}
\end{equation*}
where
\begin{equation}\label{parameters-gamma-bigger-1}
    j:=\frac{d}{q}+\frac{h_1}{q}+\frac{h_1}{pq(\gamma-1)}+\frac{h_2}{pq(1-\gamma)},\quad\alpha:=(h_1+d)\gamma-d-\frac{h_1}{q}-\frac{h_2}{p}.
\end{equation}
\end{theorem}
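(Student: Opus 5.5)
We mirror the entropy argument of Theorem~\ref{thm1}, now with $\gamma>1$, so that $x\mapsto x^\gamma/(\gamma-1)$ is convex and positive. The plan has three steps: define a functional, prove its minimality at $W$, then optimise over the dilations $\rho_s(z)=s^{d+h_1}\rho(sz)$. Throughout, fix
\[
W(z)=\omega_1^{\frac{1}{p(\gamma-1)}}\omega_2^{\frac{1}{p(1-\gamma)}}\bigl(C-|z|^q\bigr)_+^{\frac{1}{\gamma-1}} ,
\]
so that on $\{W>0\}$ we have $W^{\gamma-1}\omega_1^{1/q}\omega_2^{1/p}=\omega_1(C-|z|^q)$.

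\textbf{The functional and its minimality at $W$.} Set
\[
\mathcal{F}[\rho]=\frac{1}{\gamma-1}\int_E\rho^\gamma\omega_1^{1/q}\omega_2^{1/p}\,dz+\frac{\gamma}{\gamma-1}\int_E\rho|z|^q\omega_1\,dz
\]
on $\rho\in\mathrm{G}^{p,\gamma}$ with $\int_E\rho\,\omega_1=\int_E W\omega_1$. Convexity gives
\[
\frac{\rho^\gamma-W^\gamma}{\gamma-1}\ge\frac{\gamma}{\gamma-1}W^{\gamma-1}(\rho-W)
\]
pointwise. On $\{W=0\}=\{|z|^q\ge C\}$ this reads $\rho^\gamma/(\gamma-1)\ge0$, and we use instead $|z|^q\ge C$ there. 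After integrating against $\omega_1^{1/q}\omega_2^{1/p}$, in both regions the sum of the two terms is bounded below by $\frac{\gamma}{\gamma-1}\int C(\rho-W)\omega_1$, which vanishes by the mass constraint. Hence $H[\rho|W]=\mathcal F[\rho]-\mathcal F[W]\ge0$.

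Equality forces $\rho=W$ a.e. on $\{W>0\}$ by strict convexity. It also forces $\rho=0$ on $\{|z|^q>C\}$, since there the moment term strictly exceeds $C\rho\,\omega_1$. This yields the equality cases.

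\textbf{Optimising over dilations.} The integrals scale as $\int\rho_s^\gamma\omega_1^{1/q}\omega_2^{1/p}=s^{\alpha}K_\rho$ and $\int\rho_s|z|^q\omega_1=s^{-q}H_\rho$, with $\alpha>0$ exactly by \eqref{second-condition}. So
\[
g_\rho(s)=\frac{1}{\gamma-1}\bigl(s^\alpha K_\rho+\gamma s^{-q}H_\rho\bigr)
\]
has an interior minimum. That minimum equals a constant times $K_\rho^{q/(q+\alpha)}H_\rho^{\alpha/(q+\alpha)}$, with constant $\left[(q/\alpha)^{\alpha/(q+\alpha)}+(\alpha/q)^{q/(q+\alpha)}\right]$ times powers of $\gamma$; this is the source of the bracket in the stated constant. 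From $\min_s g_\rho\ge\mathcal F[W]$, raising to the power $(q+\alpha)/q$ gives \eqref{eq1.2} for unit-mass-normalised $\rho$. Homogeneity in the mass, with exponent $\gamma+\alpha/q$, then gives \eqref{eq1.2} for all $\rho$. The inequality is sharp because $W$ is a critical point of its own dilation family, i.e. $g_W'(1)=0$. This follows by a direct check, or because $W$ minimises $\mathcal F$ and $W_s$ has the same mass. Conversely, any optimiser $\rho$, after dilation by $s^*$, satisfies $H[\rho_{s^*}|W]=0$. Hence $\rho$ coincides with $W$ up to scaling and multiplication by a constant. This dilation-and-multiple family is exactly the stated family with arbitrary $C>0$.

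\textbf{Computing the constant.} Passing to spherical coordinates, with $c_{\omega_1,\omega_2}$ finite by \eqref{second-condition}, and substituting $t=r^q$ with $C=1$ gives
\[
\int_E W\omega_1=\frac{c_{\omega_1,\omega_2}}{q}B\Bigl(j,\frac{\gamma}{\gamma-1}\Bigr),
\]
with analogous Beta expressions for the other integrals, e.g. $B(j+1,\frac{1}{\gamma-1})$. One must check $j>0$, which is the local integrability of $W$ at the origin. I expect this to follow from \eqref{second-condition}, since $\alpha>0$ should be equivalent to $j>0$ after simplification; this is the step I'd verify most carefully. Integrability at infinity is automatic because $W$ has compact support.

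The main obstacle is purely bookkeeping: combining the Beta identities, via $B(x+1,y)=\frac{x}{x+y}B(x,y)$, into the single factor $B(j,\frac{\gamma}{\gamma-1})^{1-\gamma}$ and matching the power of $\gamma$. A second point to confirm is that the entropy comparison on $\{W=0\}$ does not require $\rho$ to vanish there a priori; the argument above handles this through the moment term.
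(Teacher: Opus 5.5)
\textbf{Verdict.} Your entropy, convexity and dilation argument is correct and essentially the paper's, but the last step does not close: the stated constant is not what the argument produces. I have not found a flaw in the remaining steps.

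\textbf{What matches the paper.} Your $\mathcal F$ is $\frac{\gamma}{\gamma-1}$ times the paper's $\frac1\gamma\int_E\rho^\gamma\omega_1^{1/q}\omega_2^{1/p}+\int_E\rho|z|^q\omega_1$. Handling $\{|z|^q\ge C\}$ through the moment term is exactly the paper's Step II, and the dilation optimisation is its Step III. You can skip the paper's preliminary H\"older step, which only gives a non-sharp constant. Your uniqueness route via the equality case $H[\rho_{s^*}|W]=0$ differs from the paper's Step IV (first variation plus Lebesgue differentiation) and is cleaner. The Euler--Lagrange equation only sees $\{\overline W>0\}$, whereas your equality case also pins down the support. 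Your guess about $j$ is right: $q(\gamma-1)j=(d+h_1)(\gamma-1)+\frac{h_1-h_2}{p}=\alpha$, so $j=\frac{\alpha}{q(\gamma-1)}>0$.

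\textbf{The gap.} The step you call ``purely bookkeeping'' does not produce the displayed constant, because that constant is not the value of the ratio at $W$. Take $C=1$, and set $b=\frac{\gamma}{\gamma-1}$ and $M_W=\int_EW\omega_1=\frac{c_{\omega_1,\omega_2}}{q}B(j,b)$. Then $H_W=\int_EW|z|^q\omega_1=\frac{c_{\omega_1,\omega_2}}{q}B(j+1,b)=\frac{j}{j+b}M_W$. The second Beta argument is $b$, not $\frac1{\gamma-1}$ as you wrote. Likewise $K_W=\int_EW^\gamma\omega_1^{1/q}\omega_2^{1/p}=\frac{c_{\omega_1,\omega_2}}{q}B(j,b+1)=\frac{b}{j+b}M_W$. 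With $j=\frac{\alpha}{q(\gamma-1)}$ this gives
\[
\mathcal C_{WCL}=\frac{K_W\,H_W^{\alpha/q}}{M_W^{\gamma+\alpha/q}}=\Bigl(\frac{c_{\omega_1,\omega_2}}{q}B(j,b)\Bigr)^{1-\gamma}\,\frac{q\gamma}{q\gamma+\alpha}\Bigl(\frac{\alpha}{q\gamma+\alpha}\Bigr)^{\alpha/q}.
\]
The bracket in the statement simplifies as
\[
\Bigl[\bigl(\tfrac{q}{\alpha}\bigr)^{\frac{\alpha}{q+\alpha}}+\bigl(\tfrac{\alpha}{q}\bigr)^{\frac{q}{q+\alpha}}\Bigr]^{-\frac{q+\alpha}{q}}=\bigl(\tfrac{\alpha}{q}\bigr)^{\frac{\alpha}{q}}\bigl(\tfrac{q}{q+\alpha}\bigr)^{\frac{q+\alpha}{q}}.
\]
So the true constant is the displayed one multiplied by $\bigl(\frac{q+\alpha}{q\gamma+\alpha}\bigr)^{\frac{q+\alpha}{q}}$, which is $<1$ for $\gamma>1$. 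In the paper's normalisation this missing factor is $(\mathcal F[W]/M_W)^{(q+\alpha)/q}$, and the last line of the paper's proof effectively sets it to $1$. Its formula for $\mathcal F[W]$ is also off: the numerator should be $1+j\gamma-j$, not $1+j$.

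\textbf{Concrete check.} Take $d=2$, $E=\mathbb R^2$, $\omega_1=\omega_2=1$, $p=q=2$, $\gamma=2$. Then $\alpha=2$, $j=1$, $W=(1-|z|^2)_+$, $M_W=\frac\pi2$, $K_W=\frac\pi3$ and $H_W=\frac\pi6$. The displayed constant equals $\frac1\pi$. With it, \eqref{eq1.2} at $\rho=W$ would assert $\frac{\pi^2}{8}\le\frac{\pi^2}{18}$, which is false. The true value is $\frac{4}{9\pi}$.

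Your method proves the theorem with the corrected constant above. It cannot prove it with the stated one, so you should redo the final computation rather than aim to match the displayed formula.
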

\begin{remark} \normalfont
By using similar techniques as in the proof of Theorem~\ref{thm1} it is possible to prove that, if an optimizer exists in $\mathrm{G}^{p, \gamma}$, then the weights must satisfy the integrability assumption $\omega_1^{1+\frac{1}{p(\gamma-1)}}\omega_2^\frac{1}{p(1-\gamma)}\in \mathrm{L}^1(\mathbb{S}^{d-1}_E, dV_{\mathbb{S}^{d-1}})$. 
\end{remark}
\begin{proof}[Proof of Theorem \ref{thm2}] The proof is similar to the one of Theorem~\ref{thm1}. The main ideas for the computation of the optimal constant go back to~\cite[Lemma 4.4]{carrillo2000asymptotic}

\noindent\textit{Step I: inequality~\eqref{eq1.2} holds for some constant $C>0$}.
Set $R>0$ and consider
\begin{align*}
	\int_E \rho \omega_1 dz=\int_{E\cap B_R} \rho \omega_1 dz+\int_{E\cap B_R^c} \rho \omega_1 dz=I_R+II_R\,.
\end{align*}
By using H\"older's inequality on $I_R$ and by integrating in spherical coordinates, we get that 
\begin{equation*}
I_R\leq A\, R^\alpha\quad\mbox{where}\quad A:=\left(\frac{(\gamma-1)c_{\omega_1,\omega_2}}{\alpha}\right)^\frac{\gamma-1}{\gamma} \left(\int_E \rho^\gamma \omega_1^\frac{1}{q}\omega_2^\frac{1}{p}dz\right)^\frac{1}{\gamma}\,,
\end{equation*}
where $\alpha$ is as in~\eqref{parameters-gamma-bigger-1}. Similarly, we obtain 
\begin{equation*}
II_R\leq B\, R^{-q}\quad\mbox{where}\quad\quad B:=\int_{E} \rho |z|^{q}\omega_1 dz\,.
\end{equation*}
We obtain that, for any $R>0$, we have $I_R+II_R\leq f(R)$ where $f(R):= A\,R^\frac{\alpha}{\gamma}+B\,R^{-q}$. As previously, by optimizing in $R$, we prove the validity of~\eqref{eq1.2}. 

\noindent\textit{Step II: non-negativity of the relative entropy functional.} 
In order to compute the optimal constant, as in the proof of Theorem~\ref{thm1}, we introduce the relative entropy functional. However, contrary to the case $\gamma<1$, here we need to work more for proving the non-negativity of the functional.

Fix $C>0$ in the definition of $W$ and consider $\rho \in \mathrm{G}^{p,\gamma}$ such that $\int_E \rho \omega_1dz=\int_E W\omega_1dz$. Define the entropy
\begin{align*}
	\mathcal{F}[\rho]:=\frac{1}{\gamma}\int_E \rho^\gamma \omega_1^\frac{1}{q}\omega_2^\frac{1}{p}dz+\int_E\rho |z|^{q}\omega_1  dz\,,
\end{align*}
and the relative entropy
\begin{align*}
	H[\rho\vert W]:=\mathcal{F}[\rho]-\mathcal{F}[W]\,.
\end{align*}
We prove that $H[\rho\vert W]\geq 0$ for such $\rho$. By the convexity of the function $x\mapsto x^\gamma$, we have
\begin{equation*}
	\frac{\rho^\gamma}{\gamma}-\frac{W^\gamma}{\gamma}- W^{\gamma-1}(\rho-W)\geq 0\quad\mbox{a.e. on the set}\,\,\,E\cap \{|z|^{q}<C\}\,.
\end{equation*}
By integrating this last expression on $E\cap \{|z|^{q}<C\}$ with respect to the measure $\omega_1^\frac{1}{q}\omega_2^\frac{1}{p} dz$, and by observing that $W^{\gamma-1}=(C-|z|^q)_+$ we get
\begin{equation}\label{inequality-1}
	\frac{1}{\gamma}\int_{E\cap \{|z|^{q}<C\}} (\rho^\gamma -W^\gamma)\omega_1^\frac{1}{q}\omega_2^\frac{1}{p}dz+\int_{E\cap \{|z|^{q}<C\}}(\rho-W)|z|^{q}\omega_1dz\geq C\int_{E\cap \{|z|^{q}<C\}}(\rho-W)\omega_1 dz.
\end{equation}
Also, since $\rho$ and $W$ have equal mass and the support of $W$ is in $\{|z|^{q}<C\}$, 
\begin{equation}\label{identity-1}
	\int_{E\cap \{|z|^{q}\geq C\}} \rho \omega_1 dz=-\int_{E\cap \{|z|^{q}< C\}} (\rho-W)\omega_1 dz.
\end{equation}
By using the previous inequality and identity we obtain
\begin{multline*}
	H[\rho\vert W]=\frac{1}{\gamma}\int_{E\cap \{|z|^{q}<C\}} (\rho^\gamma -W^\gamma)\omega_1^\frac{1}{q}\omega_2^\frac{1}{p}dz+\int_{E\cap \{|z|^{q}<C\}}(\rho-W)|z|^{q}\omega_1 dz\\
	\quad\quad+\frac{1}{\gamma}\int_{E\cap \{|z|^{q}\geq C\}}\rho^\gamma\omega_1^\frac{1}{q}\omega_2^\frac{1}{p}dz+\int_{E\cap \{|z|^{q}\geq C\}}\rho |z|^{q}\omega_1dz\\
	\geq C\int_{E\cap \{|z|^{q}<C\}}(\rho-W)\omega_1dz +\frac{1}{\gamma}\int_{E\cap \{|z|^{q}\geq C\}}\rho^\gamma\omega_1^\frac{1}{q}\omega_2^\frac{1}{p}dz+\int_{E\cap \{|z|^{q}\geq C\}}\rho |z|^{q}\omega_1dz\\
	= -C\int_{E\cap \{|z|^{q}\geq C\}} \rho \omega_1 dz+\frac{1}{\gamma}\int_{E\cap \{|z|^{q}\geq C\}}\rho^\gamma\omega_1^\frac{1}{q}\omega_2^\frac{1}{p}dz+\int_{E\cap \{|z|^{q}\geq C\}}\rho |z|^{q}\omega_1dz\\
	= \int_{E\cap \{|z|^{q}\geq C\}} \rho(|z|^{q}-C) \omega_1dz +\frac{1}{\gamma}\int_{E\cap \{|z|^{q}\geq C\}}\rho^\gamma\omega_1^\frac{1}{q}\omega_2^\frac{1}{p}dz\geq 0\,,
\end{multline*}
where in the second line we have used inequality~\eqref{inequality-1} and in the third line identity~\eqref{identity-1}.

\noindent\textit{Step III: Computation of the optimal constant.} We can now compute the optimal constant; the proof is very similar to the one of Theorem~\ref{thm1}. Define as before $\rho_s(z)=s^{d+h_1} \rho(sz)$ for $s>0$ (recall that $\int_{E} \rho_s \omega_1 dz = \int_{E} \rho \omega_1 dz$). The entropy functional could be easily computed as well in this case:
\begin{equation*}
\begin{split}
\mathcal{F}[\rho_s]&=\frac{1}{\gamma}s^{(h_1+d)\gamma-d-\frac{h_1}{q}-\frac{h_2}{p}}\int_E \rho^\gamma  \omega_1^\frac{1}{q}\omega_2^\frac{1}{p}dz+s^{-q}\int_E \rho |z|^{q}\omega_1 dz\\
&:=\frac{1}{\gamma}s^{\alpha}K_\rho+s^{-q}H_\rho=:g_\rho(s)\,,
\end{split}
\end{equation*}
where $K_\rho=\int_E \rho^\gamma  \omega_1^\frac{1}{q}\omega_2^\frac{1}{p}dz$ and $H_\rho=\int_E \rho |z|^{q}\omega_1 dz$. A simple computation shows that $g_\rho$ has its minimum at $s^*=\left(q\gamma H_\rho/\alpha K_\rho\right)^\frac{1}{q+\alpha}$ and that
\begin{equation*}
g_\rho(s^*)=\mathcal{F}[\rho_{s^*}]=\left( \frac{1}{\gamma}\left[\left(\frac{q}{\alpha}\right)^\frac{\alpha}{q+\alpha}+\left(\frac{\alpha}{q}\right)^\frac{q}{q+\alpha}\right]^\frac{q+\alpha}{q}K_\rho H_\rho^\frac{\alpha}{q}\right)^\frac{q}{q+\alpha}\geq \mathcal{F}[W].
\end{equation*}
This last inequality implies that
\begin{equation*}
\left(\int_E \rho^\gamma \omega_1^\frac{1}{q}\omega_2^\frac{1}{p}dz\right)\left( \int_E \rho|z|^{q}\omega_1dz\right)^\frac{\alpha}{q}\geq \gamma \left(\mathcal{F}[W]\right)^\frac{q+\alpha}{q} \left[\left(\frac{q}{\alpha}\right)^\frac{\alpha}{q+\alpha}+\left(\frac{\alpha}{q}\right)^\frac{q}{q+\alpha}\right]^{-\frac{q+\alpha}{q}}=:m.
\end{equation*}
Repeating the argument with $W$ in place of $\rho$ we obtain
\begin{align*}
\left(\int_E W^\gamma \omega_1^\frac{1}{q}\omega_2^\frac{1}{p}dz\right)\left( \int_E W|z|^{q}\omega_1dz\right)^\frac{\alpha}{q}=m
\end{align*}
deducing that
\begin{equation*}
\mathcal{C}_{CWL}=m\left(\int_E W \omega_1 dz\right)^{-\frac{\alpha}{q}-\gamma}
\end{equation*}
is the optimal constant in~\eqref{eq1.2} and attained by $W$ itself. Let us now explicitely compute $\mathcal{C}_{CWL}$. As in Theorem~\ref{thm1} we consider, in order to simplify the computations, that $W(z)=\omega_1^\frac{1}{p(\gamma-1)}\omega_2^\frac{1}{p(1-\gamma)}(1-|z|^{q})_+^\frac{1}{\gamma-1}$. As in the proof of Theorem~\eqref{thm1} we have the following identities
\begin{equation*}
\begin{split}
\int_E W^\gamma \omega_1^\frac{1}{q}\omega_2^\frac{1}{p}dz& =\frac{c_{\omega_1,\omega_2}}{q} B\left(j;\frac{\gamma}{\gamma-1}+1\right)\,,
\int_E W|z|^{q}\omega_1dz=\frac{c_{\omega_1,\omega_2}}{q} B\left(1+j;\frac{\gamma}{\gamma-1}\right)\,,\\
& \quad\mbox{and}\quad \int_E W\omega_1dz=\frac{c_{\omega_1,\omega_2}}{q}B\left( j;\frac{\gamma}{\gamma-1}\right)\,,
\end{split}
\end{equation*}
where $j$ is as in~\eqref{parameters-gamma-bigger-1}. Therefore,
\begin{equation*}
\mathcal{F}[W]=c_{\omega_1,\omega_2}\left[\frac{1}{\gamma q}B\left(j;\frac{\gamma}{\gamma-1}+1\right)+\frac{1}{q}B\left(1+j;\frac{\gamma}{\gamma-1}\right)\right]
=\frac{c_{\omega_1,\omega_2}(1+j)}{q(\gamma+j\gamma-j)}B\left( j;\frac{\gamma}{\gamma-1}\right)\,.
\end{equation*}

\noindent\textit{Step IV: Uniqueness of the optimizers.} To conclude, we prove uniqueness of the optimizers via the Euler-Lagrange equation they satisfy. Call $\overline{W}$ an optimizer for~\eqref{eq1.2} under the current assumptions.  Fix $\delta >0$ and define
\begin{equation*}
	A_\delta:=\{z\in E: \overline{W}(z)\geq \delta \}.
\end{equation*}
Observe that, since $\overline{W}\neq0$ there exists at least a small $\delta>0$ for which $A_\delta \neq \emptyset$.
Take $\varphi\in \mathrm{G}^{p,\gamma}\cap L^\infty$ such that $\norm{\varphi}_{L^\infty}\leq 1$ and $\varphi=0$ in $A_\delta^c$. Therefore, the function $\overline{W}+\varepsilon \varphi$ is non-negative for $\varepsilon\in (-\delta,\delta)$. Without loss of generality suppose that $\int_E \overline{W} \omega_1dz=1$ and since by hypothesis $\overline{W}$ minimizes the entropy $\mathcal{F}[\cdot]$ (see Remark~\ref{remark-optimisers}), we get by Fermat's theorem
\begin{equation*}
	\frac{d}{d\varepsilon}\mathcal{F}[\psi_\varepsilon]\lvert_{\varepsilon=0}=0, \quad\text{where }\psi_\varepsilon:=\frac{\overline{W}+\varepsilon \varphi}{\int_{E}(\overline{W}+\varepsilon \varphi)\omega_1dz}.
\end{equation*}
Performing the computations we get 
\begin{equation*}
	\int_E \varphi[\omega_1(|z|^{q}-c_{\overline{W}})+\overline{W}^{\gamma-1}\omega_1^\frac{1}{q}\omega_2^\frac{1}{p}]dz=0\quad\text{where $c_{\overline{W}}$ is a positive constant depending on $\overline{W}$}
\end{equation*}
If we then impose $\varphi=\chi_{B_r(\bar{z})\cap A_\delta}$ for $\bar{z}\in A_\delta$ and $r$ such that $B_r(\bar{z})\subseteq E$, we obtain
\begin{equation*}
	\int_{B_r(\bar{z})} \chi_{\{\overline{W}\geq \delta\}}[\omega_1(|z|^{q}-c_{\overline{W}})+\overline{W}^{\gamma-1}\omega_1^\frac{1}{q}\omega_2^\frac{1}{p}]=0.
\end{equation*}Lebesgue's differentiation theorem implies that
\begin{equation*}
	\chi_{\{\overline{W}\geq \delta\}}[\omega_1(|z|^{q}-c_{\overline{W}})+
    \overline{W}^{\gamma-1}\omega_1^\frac{1}{q}\omega_2^\frac{1}{p}]=0,\qquad \text{a.e. in }E
\end{equation*}
giving in turn (since the latter is valid for all $\delta>0$ sufficiently small) that
$\overline{W}=\omega_1^\frac{1}{p(\gamma-1)}\omega_2^\frac{1}{p(1-\gamma)}(c_{\overline{W}}-|z|^{q})_+^\frac{1}{\gamma-1}$. The proof is completed.
\end{proof}
%%%%%%%%%%%%%%%%%%%%%%%%%%%%%%%%%%%%%%%%%
%%%%%%%%%%%%%%%%%%%%%%%%%%%%%%%%%%%%%%%%%%
\section{The case \texorpdfstring{$\gamma=1$}{gamma=1}}\label{seclog}	
We complete our study by considering the limit case $\gamma=1$. In the present section we shall denote by $\mathrm{L}^1_{\omega_1}(E)$ the space $\mathrm{L}^1(E, \omega_1 dz)$. We also define, for any $q\ge1$
\begin{equation*}
	\mathrm{G}^{q,1}:=\{u:E\to\left(0, \infty\right): u\in\mathrm{L}^1_{\omega_1}(E)\quad\mbox{and}\quad \int_{E} u|z|^{q}\omega_2dz<+\infty, \int_E u\log (u)\, \omega_1dz<+\infty\}.
\end{equation*}
We also define the function
\begin{equation*}
	W:=\exp\left(-\frac{|z|^q}{q}\frac{\omega_2}{\omega_1}\right)\,,
\end{equation*}
and the entropy
\begin{equation*}
	\mathcal{F}[\rho]:=\int_E \rho \log(\rho)\, \omega_1dz+\frac{1}{q}\int_E\rho|z|^q\omega_2dz\,.
\end{equation*}
We notice that, at least formally, we have $\mathcal{F}[W]=0$, however such an identity could hold only under some integrability conditions for $W$. Our main theorem in the current case is the following.
\begin{theorem}\label{thm:gamma=1} Let $\omega_1, \omega_2$ be homogeneous weights on $E\subset\mathbb{R}^d$ with homogeneity $h_1>-d$ and $h_2$ such that $W\in \mathrm{G}^{q,1}$; assume furthermore that $\beta:=q-h_1+h_2>0 $.  Then, for each function $\rho\in \mathrm{G}^{q,1}$ the following inequality holds
\begin{equation}\label{eqlog}
		\frac{(d+h_1)}{\beta}\log\left(C_W\frac{\beta e }{q}\int_E \frac{\rho}{\norm{\rho}_{L^1_{\omega_1}(E)}} |z|^q \omega_2 dz\right)\norm{\rho}_{L^1_{\omega_1}(E)}
		\geq -\int_E\rho\log\left(\frac{\rho}{\norm{\rho}_{L^1_{\omega_1}(E)}}\right)\, \omega_1 dz
\end{equation}
where $C_W:=\int_E W\omega_1 dz$.  Equality~\eqref{eqlog} is attained if and only if $\rho=W$, up to multiplicative constants and scaling.
\end{theorem}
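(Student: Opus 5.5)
The argument follows the pattern of Theorems~\ref{thm1} and~\ref{thm2}: first show that the relative entropy is non-negative, then optimize over the mass-preserving dilations $\rho_s(z)=s^{d+h_1}\rho(sz)$.

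\emph{Step 1 (normalization and relative entropy).} Both sides of \eqref{eqlog} are $1$-homogeneous under $\rho\mapsto\lambda\rho$. Indeed, the logarithms only involve $\rho/\norm{\rho}_{L^1_{\omega_1}(E)}$, and the prefactor $\norm{\rho}_{L^1_{\omega_1}(E)}$ scales linearly. We may therefore assume $\norm{\rho}_{L^1_{\omega_1}(E)}=C_W$ and compare $\rho$ with $W$.

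Since $\log W=-\frac{|z|^q}{q}\frac{\omega_2}{\omega_1}$, we have
\[
\mathcal{F}[\rho]=\int_E \rho\log\Big(\frac{\rho}{W}\Big)\omega_1\,dz,\qquad \mathcal{F}[W]=0 .
\]
The convexity inequality $x\log(x/y)-x+y\ge0$ gives
\[
\mathcal{F}[\rho]\ge\int_E(\rho-W)\omega_1\,dz=0 ,
\]
with equality iff $\rho=W$ a.e. This requires $W\in\mathrm{G}^{q,1}$, so that all integrals are finite and the splitting of $\mathcal{F}$ is legitimate.

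\emph{Step 2 (scaling).} Under $\rho_s$ the mass is preserved, and a change of variables gives
\[
\int_E\rho_s\log\rho_s\,\omega_1\,dz=\int_E\rho\log\rho\,\omega_1\,dz+(d+h_1)\,C_W\log s .
\]
The moment term scales as
\[
\int_E\rho_s|z|^q\omega_2\,dz=s^{d+h_1-d-q-h_2}\int_E\rho|z|^q\omega_2\,dz=s^{-\beta}M_\rho .
\]
Hence
\[
g(s):=\mathcal{F}[\rho_s]=S_\rho+(d+h_1)C_W\log s+\frac{s^{-\beta}}{q}M_\rho\ge0\qquad\text{for all }s>0,
\]
where $S_\rho=\int_E\rho\log\rho\,\omega_1\,dz$. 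Since $\beta>0$, $g$ is minimized at
\[
s_*^{\beta}=\frac{\beta M_\rho}{q(d+h_1)C_W}.
\]
Substituting gives
\[
-S_\rho\le\frac{(d+h_1)C_W}{\beta}\Big[\log\Big(\frac{\beta M_\rho}{q(d+h_1)C_W}\Big)+1\Big].
\]

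\emph{Step 3 (unnormalizing and reconciling the constant).} Writing $\rho/C_W$ back as $\rho/\norm{\rho}_{L^1_{\omega_1}(E)}$, the inequality of Step~2 should reproduce \eqref{eqlog}. Concretely, $-\int_E\rho\log(\rho/\norm{\rho})\,\omega_1\,dz=-S_\rho+C_W\log C_W$, and the factor $e$ comes from the $+1$. The remaining constant must be matched against $C_W\frac{\beta e}{q}$. I expect this reconciliation to be the main obstacle, and I would resolve it by evaluating everything on $\rho=W$. Homogeneity gives the identity
\[
\int_E W|z|^q\omega_2\,dz=\frac{q(d+h_1)}{\beta}\,C_W ,
\]
obtained from $\frac{d}{ds}\mathcal{F}[W_s]\big|_{s=1}=0$, or equivalently by integrating along rays. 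This identity lets one absorb the factor $d+h_1$ and express $\int_E W\log W\,\omega_1\,dz$ through $C_W$. Using the same identity I would then adjust the stated constant so that \eqref{eqlog} becomes an equality for $W$. If a constant factor in \eqref{eqlog} turns out to differ from what the computation gives, I would correct it accordingly.

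\emph{Step 4 (equality cases).} Equality in \eqref{eqlog} forces equality in both steps. It forces $g(s_*)=0$, i.e.\ $H[\rho_{s_*}|W]=0$, and by the strict convexity in Step~1 this gives $\rho_{s_*}=W$. Hence $\rho$ is a multiple of a dilation of $W$. Conversely, $W$ attains equality because it satisfies $s_*=1$ by the identity in Step~3, and the inequality is invariant under multiplication and dilation.
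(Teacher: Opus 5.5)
Your argument is the paper's: $\mathcal{F}[\rho]=\int_E\rho\log(\rho/W)\,\omega_1\,dz\ge0$ at mass $C_W$ by convexity, then minimization over the dilations $\rho_s$. Steps 1 and 2 are correct. So is your identity $\int_E W|z|^q\omega_2\,dz=\frac{q(d+h_1)}{\beta}C_W$: integrate along rays, using $W(r\theta)=\exp(-r^{\beta}\omega_2(\theta)/(q\,\omega_1(\theta)))$. For the equality case, your route is more direct than the paper's Euler--Lagrange argument with test functions $\varphi\in C^\infty_c(E)$: equality forces $\mathcal{F}[\rho_{s_*}]=0=\mathcal{F}[W]$ at equal mass, hence $\rho_{s_*}=W$ by strict convexity.

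Step 3, which you left open, is exactly where the statement fails, and your suspicion is justified: the constant in \eqref{eqlog} cannot be reconciled. Write $m:=\norm{\rho}_{L^1_{\omega_1}(E)}$, apply your Step 2 bound to $C_W\rho/m$, and multiply by $m/C_W$. This gives exactly
\[
\frac{d+h_1}{\beta}\log\Big(C_W^{\frac{\beta}{d+h_1}}\,\frac{\beta e}{q\,(d+h_1)}\int_E\frac{\rho}{m}\,|z|^q\omega_2\,dz\Big)\,m\;\ge\;-\int_E\rho\log\Big(\frac{\rho}{m}\Big)\,\omega_1\,dz ,
\]
with equality at $\rho=W$, where $s_*=1$ by your identity. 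This coincides with \eqref{eqlog} only if $C_W^{\beta/(d+h_1)-1}=d+h_1$.

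Indeed, by your identity, at $\rho=W$ the left side of \eqref{eqlog} minus its right side equals $\frac{(d+h_1)C_W}{\beta}\log\big((d+h_1)C_W\big)-C_W\log C_W$. This is generically nonzero and can be negative. For $d=1$, $E=\mathbb{R}$, $\omega_1=\omega_2\equiv1$, $q=2$, \eqref{eqlog} at the Gaussian reads $\frac{\sqrt{2\pi}}{2}\log(\sqrt{2\pi}\,e)\ge\frac{\sqrt{2\pi}}{2}\log(2\pi e)$, which is false. The corrected form is the classical bound $-\int\rho\log\rho\le\frac d2\log\big(\frac{2\pi e}{d}\int\rho|z|^2\big)$ for probability densities. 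The scaling $\omega_i\mapsto\lambda\omega_i$, which leaves $W$ unchanged, also shows that a sharp inequality must contain $C_W$ to the power $\beta/(d+h_1)$.

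The paper's proof has the same slip. After optimizing in $s$, its right-hand side should be $\frac{d+h_1}{\beta}C_W\log\big((d+h_1)C_W\big)$, not $\frac{d+h_1}{\beta}\big(\int_E\rho\,\omega_1\big)\log\big(\int_E\rho\,\omega_1\big)$, and the passage to \eqref{eqlog} does not track the normalization. Your argument, carried through, proves the displayed inequality together with the equality characterization (equality iff $\rho$ is a multiple of a dilation of $W$). That is the correct statement; \eqref{eqlog} as printed is not.
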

\begin{proof}
The proof follows the line of the proof of Theorems~\ref{thm1} and~\ref{thm2}. Let $\rho\in \mathrm{G}^{q,1}$ such that $\int_E\rho \omega_1dz=C_W$. Using the convexity of the function $x\mapsto x\log x$ we have
\begin{equation*}
		\rho \log(\rho)-W\log (W)-(\log (W)+1)(\rho-W)\geq 0.
\end{equation*}
Integrating with respect to $\omega_1$ and using the definition of $W$ we find $\mathcal{F}[\rho]\geq \mathcal{F}[W]=0$. Define $\rho_s(z):=s^{d+h_1}\rho(s z)$, then
\begin{equation*}
\begin{split}
		\mathcal{F}[\rho_s]=&s^{d+h_1}\int_E \rho(s z)((d+h_1)\log s+\log(\rho(s z)))\omega_1 dz+\frac{s^{d+h_1}}{q}\int_E \rho(s z)|z|^q\omega_2 dz\\
		=&\int_E\rho\log(\rho)\, \omega_1 dz +(d+h_1)\log s \int_E \rho \omega_1 dz+\frac{s^{-q+h_1-h_2}}{q}\int_E \rho |z|^q\omega_2 dz\\
		=&\int_E\rho\log(\rho)\, \omega_1 dz+A \log s+Bs^{-\beta}=:\int_E\rho\log(\rho)\, \omega_1 dz+g(s)\,,
\end{split}
\end{equation*}
where $A=\left(d+h_1\right)\,C_W$ and $B=\int_E \rho |z|^q\omega_2 dz/q$. The minimum of $g$ is attained at $s_*=\left(B\,\beta/A\right)^\frac{1}{\beta}$ and so
\begin{align*}
		\mathcal{F}[\rho_s]\geq \mathcal{F}[\rho_{s_*}]=g(s^*)= \int_E\rho\log(\rho)\, \omega_1+\frac{A}{\beta}\left(\log\left(\frac{B\beta}{A}\right)+1\right)\geq \mathcal{F}[W]=0.
\end{align*}
We then obtain
\begin{equation*}
\begin{split}
		\int_E\rho\log(\rho)\, \omega_1 dz+\frac{(d+h_1)}{q-h_1+h_2}\left(\int_E\rho \omega_1dz\right)\left(\log\left(\frac{(q-h_1+h_2)e}{q}\int_E \rho |z|^q \omega_2 dz\right)\right)\\
		\geq \frac{(d+h_1)}{q-h_1+h_2}\left(\int_E\rho \omega_1 dz\right)\log\left(\int_E\rho \omega_1 dz\right)
\end{split}
\end{equation*}
and inequality~\eqref{eqlog} follows. Performing the same computations with $W$ in place of $\rho$ we get that $W$ attains the equality.

\noindent Let us now classify all optimal functions for \eqref{eqlog}. Suppose $\overline{W}\in \mathrm{G}^{q,1}$ is such an optimal function for the entropy functional $\mathcal{F}$, see again Remark~\ref{remark-optimisers}. Up to a multiplication for a positive constant, suppose that $\int_E \overline{W} \omega_1 dz=C_W$. Up to scaling (since \eqref{eqlog} is invariant under scaling) suppose also that $\mathcal{F}[\overline{W}]=0$, since $\overline{W}$ is optimal function for \eqref{eqlog} by assumption. Take $\varphi\in C^\infty_c(E)$ and observe that
\begin{align*}
	\frac{d}{d\varepsilon}\mathcal{F}[\psi_\varepsilon]\lvert_{\varepsilon=0}=0, \quad\text{where }\psi_\varepsilon:=\frac{\overline{W}+\varepsilon \varphi}{\int_{E}(\overline{W}+\varepsilon \varphi)\omega_1dz}.
\end{align*}
By a simple computation, we deduce
\begin{align*}
    \frac{d}{d\varepsilon}\mathcal{F}[\psi_\varepsilon]\lvert_{\varepsilon=0}=\int_E\varphi[\omega_1 \log(\overline{W})+\frac{1}{q}|z|^q\omega_2]dz=0
\end{align*}
concluding that $\overline{W}=W$, and the proof is concluded.
\end{proof}

%%%%%%%%%%%%%%%%%%%%%%%%%%%%%%%%%%%%%%%%%%%%%%%%%%%%%%
%%%%%%%%%%%%%%%%%%%%%%%%%%%%%%%%%%%%%%%%%%%%%%%%%%%%%%
\section{Application to weighted Gagliardo-Nirenberg inequalities}\label{secgagnir}
Let us conclude the paper with some applications and examples concerning weighted versions of the Gagliardo-Nirenberg-Sobolev inequalities~\eqref{eqGN}. The main application is a rigidity result for the following weighted Gagliardo-Nirenberg inequality, as obtained in \cite[Theorem 1.1]{Balogh_2025}. Consider
\begin{equation}\label{eqgninequality}
 	\mathcal{C}_{BDK} \left(\int_E |u|^{\sigma p} \omega_1 dz\right)^\frac{1}{\sigma\,p} \leq \left(\int_{E}|\nabla u|^p\,\omega_2dz\right)^\frac{1-\theta}{p}\left(\int_E |u|^{\sigma\,p\gamma}\omega_3dz\right)^\frac{\theta}{\sigma\,p\gamma}
\end{equation} 
 where 
\begin{equation}\label{parameters-GNS}
 	d\ge 2\,,\quad 1< p<\infty\,, \quad 1>\gamma>\max\{1-\frac1d, \frac1{q}\}\,,\quad\sigma=\frac{1}{1+p(\gamma-1)}\,,
\end{equation}
and the value of $\theta$ is deduced by scaling properties. As explained in the introduction, the inequality is posed on an open convex cone $E\subset \mathbb{R}^d$ where the three weights $\omega_1,\,\omega_2,\,\omega_3 \in C^1(E)$ are homogeneous weights of degree denoted, respectively, by $h_1,\,h_2,\,h_3$. 
We also assume that
\begin{equation*}
h_3=\frac{h_1}{q}+\frac{h_2}{p}\,.
\end{equation*}
The main assumption on the weights in this case is the following, see also~\cite[Theorem 1.1]{Balogh_2025}
\begin{equation}\label{eqmain}
\begin{split}
     \left(\frac{1}{1-\gamma}-d\right)\left( \left( \frac{\omega_2(y)}{\omega_2(x)}\right)^\frac{1}{p}\left(\frac{\omega_1(y)}{\omega_1(x)}\right)^{\frac{1}{q}-\gamma}\right)^\frac{1}{1-d(1-\gamma)}\leq& \left(\frac{1}{1-\gamma}+K\right) \frac{\omega_3(x)}{\omega_1^{1/q}(x)\omega_2^{1/p}(x)}\\
     &+C_0\left(\frac{1}{p}\frac{\nabla \omega_2(x)}{\omega_2(x)}+\frac{1}{q}\frac{\nabla \omega_1(x)}{\omega_1(x)} \right)\cdot y\,,
\end{split}
\end{equation}
where $K\in \R$, $C_0>0$ and $x,y\in E$. Remark that \eqref{eqmain} is equivalent to a curvature dimension condition $CD(0,d+h_1)$ if all weights are equal and $h_1$ is the common degree of homogeneity. Here is a refined version of~\cite[Theorem 1.3]{Balogh_2025}. In what follows, we consider that a function attains equality in~\eqref{eqgninequality} if assumption~\eqref{H_1} (as stated in the introduction) holds. 
\begin{corollary}\label{corrig}
Let us~\eqref{parameters-GNS},~\eqref{eqmain} hold and that $\frac{1}{1-\gamma}+K>0$.
Under the assumptions of Theorem~\ref{thm1}, if there exists a non-trivial function attaining equality in \eqref{eqgninequality} with constant $\mathcal{C}_{BDK}$, then the three weights are equal up to multiplicative constants.
 \end{corollary}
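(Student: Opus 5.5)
The plan is to reduce Corollary~\ref{corrig} to the conditional rigidity statement \cite[Theorem 1.2]{Balogh_2025}. That result asserts the following: if \eqref{eqmain} holds with $\frac{1}{1-\gamma}+K>0$, and both \eqref{H_1} and \eqref{H_2} are satisfied, then the three weights coincide up to multiplicative constants. Hypothesis \eqref{H_1} is exactly our assumption that a non-trivial function attains equality in \eqref{eqgninequality} with constant $\mathcal{C}_{BDK}$. The only work left is therefore to check \eqref{H_2}, namely that \eqref{WCL} admits a non-zero optimizer, and this is what Theorem~\ref{thm1} supplies.

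First I will match the parameters. In \eqref{WCL} as used in \cite{Balogh_2025}, the exponent $\gamma$, the pair $(p,q)$ and the weights $\omega_1,\omega_2$ are the same as in \eqref{eqgninequality}. The weight $\omega_1^{1/q}\omega_2^{1/p}$ is homogeneous of degree $h_3=\frac{h_1}{q}+\frac{h_2}{p}$, which is consistent with the third weight $\omega_3$ appearing there. Since we assume the hypotheses of Theorem~\ref{thm1}, namely \eqref{hp1}, \eqref{hp2} and \eqref{hp3}, that theorem shows that \eqref{WCL} holds on $\mathrm{G}^{p,\gamma}$. Its optimal constant $\mathcal{C}_{WCL}$ is attained by $W$ in \eqref{eqoptimiser}, which is non-zero and belongs to $\mathrm{G}^{p,\gamma}$ by the discussion after \eqref{hp3.1}. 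Hence \eqref{H_2} holds.

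Second, I will verify that the remaining structural requirements of \cite[Theorem 1.2]{Balogh_2025} are covered by what we assume. These are: $E$ is an open convex cone, the weights are $C^1$ and homogeneous, the parameter range is \eqref{parameters-GNS}, and condition \eqref{eqmain} holds with $\frac{1}{1-\gamma}+K>0$. All of these are either stated in the corollary or built into the standing assumptions of the section. Applying \cite[Theorem 1.2]{Balogh_2025} then gives $\omega_1,\omega_2,\omega_3$ equal up to constants.

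The main obstacle I expect is the compatibility of function classes. The optimizer required by \eqref{H_2} in \cite{Balogh_2025} must lie in the class where their optimal-transport argument runs. Their argument pushes the extremal $F^{p\sigma}$ to the density $\rho$ of \eqref{WCL}, which requires finite weighted $q$-moments and $\rho^\gamma$ integrability. I would check that $W\in\mathrm{G}^{p,\gamma}$ is enough for this, using the integrability of $W$ at $0$ and at $\infty$ given by \eqref{hp2}, \eqref{hp3} and \eqref{hp3.1}. I would also check that the optimal constant computed in Theorem~\ref{thm1} is the one entering their duality. If there is a mismatch, such as a normalization or the admissible class $\mathrm{G}^{p,\gamma}$ versus their class, I would fall back on the scaling $\rho_s(z)=s^{d+h_1}\rho(sz)$ and mass normalization to identify the two extremal problems.
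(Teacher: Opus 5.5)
Your proposal is correct and matches the paper's proof: \cite[Theorem 1.2]{Balogh_2025} is applied with \eqref{H_1} given by the assumed extremal, and \eqref{H_2} supplied by Theorem~\ref{thm1}, which shows that the quotient $\mathcal{Q}_{\omega_1,\omega_2}$ in \eqref{quotient} is attained by $W$. Your extra paragraph on matching function classes and normalizations is more careful than the paper, which does not check this, but it does not change the argument.
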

 \begin{proof}
     It is enough to apply \cite[Theorem 1.2]{Balogh_2025} together with Theorem \ref{thm1}, which ensures that the quotient $\mathcal{Q}_{\omega_1, \omega_2}$ \eqref{quotient} is attained by a function.
 \end{proof}
 Let us make a concrete example with monomial weights. Let $A,B,C\in \mathbb{R}_+^d$ be three vectors and denote by $h_1:=\sum_{i=1}^d A_i$, $h_2:=\sum_{i=1}^d B_i$, $h_3:=\sum_{i=1}^d C_i$ the sum of their components. Define three monomial weights as 
 \begin{align*}
     \omega_1(z):=z^A:=\prod_{i=1}^d z_i^{A_i},\quad \omega_2(z):=z^B:=\prod_{i=1}^d z_i^{B_i},\quad 
     \omega_3(z):=z^C:=\prod_{i=1}^d z_i^{C_i},
 \end{align*}
 and the cone $E$ as
 \begin{align*}
     E=\{z=(z_1,z_2,...,z_d)\in \mathbb{R}^d:z_i>0\,\text{whenever}\,C_i>0\}.
 \end{align*}
 It was proved in \cite[Example 5.1]{Balogh_2025} via a generalised AM-GM inequality that these weights satisfy \eqref{eqmain} for some constants $K$ and $C_0$ where in particular
 \begin{equation*}
     K=-\frac{1}{1-\gamma}+C_0\left( -d-h_1+\frac{1}{1-\gamma}\left(1+\frac{h_1-h_2}{p}\right)\right).
 \end{equation*}
 Therefore, $\frac{1}{1-\gamma}+K>0$ if and only if 
 \begin{equation}\label{eq6.5}
     \frac{d+\frac{h_1}{q}+\frac{h_2}{p}-1}{d+h_1}<\gamma.
 \end{equation}

 Moreover, \eqref{hp1} is satisfied if and only if 
 \begin{equation}\label{eq6.7}
     \frac{A_i}{q}+\frac{B_i}{p}+1>\gamma(1+A_i).
 \end{equation}
Then, if we sum \eqref{eq6.7} over $i\in \{1,..,d\}$ we obtain \eqref{hp2}. 
So, under hypotheses \eqref{eq6.7}, \eqref{eq6.5} and \eqref{hp3}, Corollary \ref{corrig} tells that inequality \eqref{eqgninequality} cannot admit an optimal function unless $A_i=B_i=C_i$ for all $i$.\\ \ \\
To conclude, we observe that there are some homogeneous weights which do not necessarily satisfy condition \eqref{eqmain}. For instance, it is necessary to add a radial component to monomial weights to find that the condition does not hold any longer: let us show it in the following lines, considering the weights from \cite{pagliarin2026sharp}.
Consider the limiting case of the Sobolev inequality in \eqref{eqgninequality} with $\theta_1=1$ and $\omega_3=0$ (by using the convention that $0^0=1$) and define $\omega_1(z):=z^A|z|^\xi$, $\omega_2(z):=z^A |z|^\eta$. For simplicity assume $p=2$ and $A_d=0$. Then the RHS of \eqref{eqmain} is equal to
\begin{align*}
&\frac{1}{2}\left(\frac{\nabla \omega_1(x)}{\omega_1(x)}+\frac{\nabla \omega_2(x)}{\omega_2(x)}\right)\cdot y=\frac{1}{2}\left(\xi+\eta\right)\frac{x\cdot y}{|x|^2}+\frac{1}{2}\sum_{i=1}^{d-1}A_i\frac{y_i}{x_i}\\=&\frac{1}{2}\left(\xi+\eta\right)\frac{x_d y_d}{|x|^2}+\frac{1}{2}\sum_{i=1}^{d-1}\left(A_i\frac{y_i}{x_i}+\left(\xi+\eta\right)\frac{x_iy_i}{|x|^2}\right)
\end{align*} 
If we choose $x=(1,...,1)$ and $y=(1,1,...,1,-\text{sgn}(\xi+\eta)k)$, with $k>0$ to be determined, we obtain
\begin{align*}
-\frac{1}{2}|\xi+\eta|\frac{k}{d}+\frac{1}{2}h_1+\frac{1}{2}(\xi+\eta)\frac{d-1}{d}
\end{align*}
which is negative if $\xi+\eta\neq 0$ for $k$ sufficiently big, contraddicting \eqref{eqmain}.
 \section{Acknowledgements}
This project has received funding from the European Union's Horizon Europe research and innovation programme under the Marie Sklodowska-Curie grant agreement No 101126554.
\section{Disclaimer}
Co-Funded by the European Union. Views and opinions expressed are however those of the author only and do not necessarily reflect those of the European Union. Neither the European Union nor the granting authority can be held responsible for them.
\addcontentsline{toc}{section}{~~~References}
\bibliographystyle{siam}\small
\bibliography{references}
\end{document}